\newcommand{\Rmnum}[1]{\expandafter\@slowromancap\romannumeral #1@}
\titleformat{\subsection}{\it}{\thesubsection.\enspace}{1pt}{}
\newtheorem{theo}{Theorem}[section]
\newtheorem{lemm}[theo]{Lemma}
\newtheorem{defi}[theo]{Definition}
\newtheorem{coro}[theo]{Corollary}
\newtheorem{prop}[theo]{Proposition}
\newtheorem{rema}[theo]{Remark}
\numberwithin{equation}{section}
\begin{document}
\title{The local well-posedness, blow-up phenomena and ill-posedness of a new fifth-order Camassa-Holm type equation
	\hspace{-4mm}
}

\author{Yiyao $\mbox{Lian}^1$ \footnote{Email: lianyy7@mail2.sysu.edu.cn},\quad
	Zhaoyang $\mbox{Yin}^{1}$\footnote{E-mail: mcsyzy@mail.sysu.edu.cn}\\
$^1\mbox{School}$ of Science,\\ Shenzhen Campus of Sun Yat-sen University, Shenzhen 518107, China}

\date{}
\maketitle
\hrule

\begin{abstract}
	In this paper, we study a new fifth-order Camassa-Holm type equation derived by Li \cite{Li.Z}. We firstly establish the local well-posedness in the sense of Hadamard for the Cauchy problem of the new fifth-order Camassa-Holm type equation in Besov spaces. 
	Secondly, we obtain  blow-up criteria. Building upon this, by utilizing the conservation laws and establishing local boundedness, we derive a blow-up result that precisely determines the blow-up time. Finally, the ill-posedness of the new fifth-order Camassa-Holm type equation in the critical Sobolev space $H^{\frac{1}{2}}$ is established via a norm inflation argument.
	
	
	\vspace*{5pt}
	\noindent{\it Keywords}: Fifth-order Camassa-Holm type equation; Besov spaces; Local well-posedness; Blow-up; Ill-posedness.
\end{abstract}

\vspace*{10pt}

\tableofcontents

	\section{Introduction}\par
	\quad\quad The Camassa-Holm (CH) equation 
	$$u_t-u_{xxt}+3uu_x=2u_xu_{xx}+uu_{xxx} $$
	is a profoundly significant nonlinear partial differential equation in the fields of fluid dynamics and integrable systems, which has attracted increasing attention in recent years. The CH equation was first introduced in the context of hereditary symmetries by Fuchssteiner and Fokas \cite{Fokas1981} in 1981. It was later explicitly derived as a model for shallow water waves by Camassa and Holm \cite{Camassa1993} in 1993. This equation possesses a rich mathematical structure. Notably, it is completely integrable \cite{Camassa1993,Con2,Con3}, admits a bi-Hamiltonian formulation \cite{Con4,Fokas1981}, and possesses peaked soliton solutions (peakons) of the form $ce^{-|x-ct|}$ \cite{Camassa1993}. These solutions describe traveling waves of largest amplitude \cite{Con6,Con7} and possess orbital stability \cite{Con7}. In addition, the CH equation can characterize wave-breaking phenomenon \cite{Beals2000,Con5,Whitham1999}. Furthermore, local well-posedness of CH equation in Sobolev spaces and Besov spaces has been shown in \cite{Con8,Danchin2001,Danchin2003,Li.J2016}. \par
	
	We note that the nonlinearity in the Camassa–Holm equation is quadratic. Recently, Camassa-Holm type equations with cubic nonlinearity have attracted considerable attention. One particularly renowned Camassa-Holm type equation with cubic nonlinearity is the modified Camassa-Holm (mCH) equation, also known as the Fokas–Olver–Rosenau–Qiao (FORQ) equation \cite{Fokas1995,Fuchssteiner1996,Olver1996,Qiao2006}
	$$m_t+\left[(u^2-u_x^2)m\right]_x=0,\quad m=u-u_{xx}.$$
	It was shown in \cite{Fu2013} that the mCH equation is locally well-posed in the Sobolev space $H^s$ for $s> \frac{5}{2}$. Additionally, the authors of \cite{Fu2013,Gui2013} investigated its local well-posedness in Besov spaces and analyzed its blow-up phenomena. Moreover, it was established in \cite{Gui2013} that the following Hamiltonian functionals are conserved for strong solutions $u(t, x)$ of the mCH equation on $\mathbb{R}$:
	$$ H_1(u)=\int_{\mathbb{R}}\left(u^2+u_x^2\right)dx, \quad H_2(u)=\int_{\mathbb{R}}\left(u^4+2u^2u_x^2-\frac{1}{3}u_x^4\right)dx.$$\par
	In recent years, there has also been growing interest in higher-order Camassa-Holm type equations. A representative example is the fifth-order  Camassa-Holm (FOCH) equation introduced by Liu and Qiao \cite{Liu2018}	
	$$m_t+m_xu+bmu_x=0,\quad m=(1-\alpha^2\partial_x^2)(1-\beta^2\partial_x^2)u,\quad \alpha\neq\beta,\, \alpha\beta\neq0$$
	with any constant $b$ and two arbitrary real constants $\alpha, \beta$. For the case $\alpha = \beta = 1$ and $b=2$, McLachlan and Zhang  \cite{McLachlan2009} established local well-posedness in $H^s$ for $s > \frac{7}{2}$. Subsequently, Tang and Liu \cite{Tang2015} proved this result in the critical Besov space $B^{7/2}_{2,1}$, and  in $B^s_{p,r}$ for $1 \leq p, r \leq +\infty$ with $s > \max\{3 + \frac{1}{p}, \frac{7}{2}\}$, while also demonstrating ill-posedness in $B^{7/2}_{2,\infty}$. For the case $b = 2$ with $m = u - u_{xx} + u_{xxxx}$, local well-posedness was shown to hold in $H^s$ for $s > \frac{9}{2}$ in \cite{Tian2011}. For the general case, the local and global existence of solutions for the FOCH model were established in \cite{Zhu2021}.
	
	 In this paper, we investigate the Cauchy problem for a new fifth-order Camassa-Holm type equation derived by Li \cite{Li.Z}, 
	\begin{align}\label{eq0}
	\begin{split}
		u_{t}-2u_{xxt}&+ u_{xxxxt}+\left( 2u^{3}+uu_{x}^{2}+u^{2}u_{xx}-18u_{x}^{2}u_{xx}+3uu_{xx}^{2}\right.\\
		&+\left. 4uu_{x}u_{xxx}+u^{2}u_{xxxx}+u_{xx}^{3}+4u_{x}u_{xx}u_{xxx}+u_{x}^{2}u_{xxxx}\right)_{x}=0,
	\end{split}
	\end{align}
which is characterized by both cubic nonlinearities and higher-order dispersion. Specifically, drawing inspiration from the mCH equation, Li derived \eqref{eq0}
by leveraging conserved quantities. From Li \cite{Li.Z}, equation \eqref{eq0} admits two fundamental invariants, denoted as 
$$E(u)=\int_{\mathbb{R}}\Big(u^2+2u_x^2+u_{xx}^2\Big)dx$$
and 
$$F(u)=\int_{\mathbb{R}}\Big(u^4-u^2u_x^2+\frac{10}{3}u_x^4+u^2u_{xx}^2+u_x^2u_{xx}^2\Big)dx,$$
which will play a crucial role in the analysis of the blow-up phenomena presented in this work.  Moreover, Li proved that the equation admits pseudo peakons and that these pseudo peakons are orbitally stable with respect to small perturbations in the energy space. Compared with the mCH equation and the FOCH equation, the equation under consideration still has numerous properties worthy of investigation. Our primary focus will be on analyzing the local well-posedness, blow-up phenomena and ill-posedness of this equation in Besov spaces.\par
We now present several equivalent forms of equation \eqref{eq0}, which will facilitate the proof of subsequent theorems. By setting $P(D)=(1-\partial_x^2)^{-2},~P_1(D)=(1-\partial_x^2)^{-1}$, \eqref{eq0} can be  rewritten as
	\begin{equation}\label{eqn}
		\left\{\begin{array}{l}
			n_{t}+(u^{2}+u_{x}^{2})n_{x}=G(u),\\
			n=(1-\partial_x^2)u,\\
			u|_{t=0}=u_{0},
		\end{array}\right.
	\end{equation} 
	where $G(u)=P_1(D)G_1(u)+\partial_xP_1(D)G_2(u)+\partial_x^2P_1(D)G_3(u)$, with $$G_1(u)=2u_xu_{xx}^2,~ G_2(u)=-\frac{5}{3}u^3-2uu_x^2-3u^2u_{xx}+16u_x^2u_{xx}-uu_{xx}^2,~ G_3(u)=-u_xu_{xx}^2-2uu_xu_{xx}.$$
	After some transformations, \eqref{eq0} can also be changed into a transport-like equation
\begin{equation}\label{eq2}
		\left\{\begin{array}{l}
			u_{t}+(u^{2}+\frac{1}{3}u_{x}^{2})u_{x}=P(D)F_1(u)+\partial_xP(D)F_2(u)+\partial_x^2P(D)F_3(u),\\
			u|_{t=0}=u_{0},
		\end{array}\right.
	\end{equation}
with
$F_1(u)=\frac{1}{3}u_x^3,~F_2(u)=-\frac{5}{3}u^3-5uu_x^2-3u^2u_{xx}+24u_x^2u_{xx}-uu_{xx}^2,~F_3(u)=u_xu_{xx}^2+4uu_xu_{xx}.$\par
This paper firstly investigates the local well-posedness for the Cauchy problem \eqref{eqn} (see Theorem \ref{the3.1}). The proof of the local well-posedness is inspired by the argument of approximate solutions by \cite{BCD}. However, in this paper, we apply Moser-type inequalities to obtain refined estimates.
 Having established the local well-posedness of solutions to equation \eqref{eqn}, we first derive a blow-up criterion of the solution to \eqref{eqn} (see Theorem \ref{thm4.2}). Furthermore, under the additional assumption that $u_0\in H^2$, we derive a corollary to this blow-up criterion (see Corollary \ref{cor4.3}).  
 Subsequently, the blow-up analysis for equation \eqref{eqn} becomes particularly challenging due to its higher-order nonlinearities and the lack of a sign-preservation property. Nevertheless, a crucial tool remains at our disposal: the conservation laws. By utilizing the conserved quantity $E(u)$ and establishing the local boundedness of $u_x$ via the Lagrangian coordinate transformation, we establish a blow-up result that characterizes the blow-up time (see Theorem \ref{thm4.4}). For the ill-posedness analysis, we prove the phenomenon of norm inflation by constructing suitable initial data, which implies the ill-posedness of \eqref{eqn} in the Sobolev space $H^{\frac{1}{2}}$ (see Theorem \ref{thm5.1}). \par
The paper is organized as follows. In Section 2, we give some preliminaries which will be used in the sequel. In Section 3, we establish the local well-posedness of \eqref{eqn} in $B^s_{p,r}$ with $\{1\leq p,r\leq \infty, s>\max(\frac{1}{2},\frac{1}{p})\}$ by the Littlewood–Paley theory and transport equations theory. In Section 4, we present the corresponding blow-up criteria for equation \eqref{eqn}. Subsequently, we derive a blow-up result that exactly determines the blow-up time. In Section 5, the ill-posedness of equation \eqref{eqn} in the Sobolev space $H^{\frac{1}{2}}$ is established through a proof of norm inflation.

	\section{\textbf{Preliminaries}}

	In this section, we will present some propositions about the Littlewood-Paley decomposition and the nonhomogeneous Besov spaces.

 \begin{prop}\label{prop}\rm{(The Littlewood-Paley decomposition)} \cite{BCD}
  		Let $\mathcal{C}$ be the annulus $\{\xi\in\mathbb{R}^d:\frac 3 4\leq|\xi|\leq\frac 8 3\}$. There exist radial functions $\chi$ and $\varphi$, valued in the interval $[0,1]$, belonging respecitvely to $\mathcal{D}(B(0,\frac{4}{3}))$ and $\mathcal{D}(\mathcal{C})$, and such that
	$$ \forall\xi\in\mathbb{R}^d,~ \chi(\xi)+\sum_{j\geq 0}\varphi(2^{-j}\xi)=1, $$
  	$$ |j-j'|\geq 2\Rightarrow\mathrm{Supp}\ \varphi(2^{-j}\cdot)\cap \mathrm{Supp}\ \varphi(2^{-j'}\cdot)=\emptyset, $$
  	$$ j\geq 1\Rightarrow\mathrm{Supp}\ \chi \cap \mathrm{Supp}\ \varphi(2^{-j}\cdot)=\emptyset. $$
\end{prop}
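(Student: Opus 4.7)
The plan is to obtain $\chi$ and $\varphi$ by the classical ``cutoff plus dyadic difference'' recipe: first I would build a smooth radial plateau $\chi$ equal to $1$ on $B(0,3/4)$ and vanishing outside $B(0,4/3)$, then define $\varphi(\xi):=\chi(\xi/2)-\chi(\xi)$. The plateau $\chi$ is produced from a smooth nonincreasing one-variable profile $\theta:\mathbb{R}_+\to[0,1]$ with $\theta=1$ on $[0,3/4]$ and $\theta=0$ on $[4/3,\infty)$ (built by the standard bump-function recipe from, say, two suitably shifted and rescaled copies of $e^{-1/t}$ glued into a partition-of-unity on $[3/4,4/3]$), after which the setting $\chi(\xi):=\theta(|\xi|)$ gives a radial $C^{\infty}_c$ function valued in $[0,1]$ with the required support.

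For $\varphi$: since $\theta$ is nonincreasing we have $\chi(\xi/2)\ge\chi(\xi)$, so $\varphi\ge 0$, and plainly $\varphi\le\chi(\xi/2)\le 1$. If $|\xi|<3/4$ then both $\chi(\xi/2)$ and $\chi(\xi)$ equal $1$, and if $|\xi|>8/3$ then $|\xi/2|>4/3$ so both equal $0$; in either case $\varphi(\xi)=0$, confining $\mathrm{Supp}\,\varphi$ to the annulus $\mathcal{C}$. The partition-of-unity identity then follows by telescoping,
$$\chi(\xi)+\sum_{j=0}^{N}\varphi(2^{-j}\xi)=\chi(2^{-N-1}\xi),$$
and for every fixed $\xi$ the argument $2^{-N-1}\xi$ eventually enters $B(0,3/4)$, where $\chi\equiv 1$; passing $N\to\infty$ gives the required identity (the series is in fact locally finite in $\xi$).

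The two support-separation statements then reduce to elementary dyadic arithmetic based on the inclusion $\mathrm{Supp}\,\varphi(2^{-j}\cdot)\subset\{3\cdot 2^{j-2}\le|\xi|\le 2^{j}\cdot 8/3\}$. When $j'\ge j+2$, the lower endpoint $3\cdot 2^{j'-2}\ge 3\cdot 2^{j}$ of $\mathrm{Supp}\,\varphi(2^{-j'}\cdot)$ already exceeds the upper endpoint $2^{j}\cdot 8/3$ of $\mathrm{Supp}\,\varphi(2^{-j}\cdot)$, which gives the first disjointness. For $j\ge 1$ the lower endpoint $3\cdot 2^{j-2}\ge 3/2>4/3$ exceeds the radius of $\mathrm{Supp}\,\chi$, yielding the second. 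The only genuinely nontrivial ingredient is the initial plateau construction for $\chi$; once that is in hand, the remainder is purely formal (telescoping and support arithmetic), so the ``main obstacle'' — if one can call it that — is entirely absorbed into standard bump-function technology.
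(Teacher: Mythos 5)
Your construction is correct, and it is exactly the standard one: the paper offers no proof of this proposition but simply cites it from Bahouri--Chemin--Danchin, where the functions are built precisely as you describe (a radial plateau $\chi$ and the dyadic difference $\varphi(\xi)=\chi(\xi/2)-\chi(\xi)$, followed by the telescoping identity and the support arithmetic). All of your verifications check out, so there is nothing to compare against beyond noting that you have reproduced the reference's argument.
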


\begin{rema}{\rm\cite{BCD}}
	\begin{enumerate}[(1)]
		\item  We fix the functions $\chi$ and $\varphi$ in  Proposition \ref{prop}. Then for all $u\in\mathcal{S}'$, the nonhomogeneous dyadic blocks $\Delta_j$ are defined by
		$$
		\Delta_j u=0,\, ~\text{if}~ j\leq -2,~ \Delta_{-1}u=\chi(\mathcal{D})u=\mathcal{F}^{-1}(\chi \mathcal{F}u),$$
		$$\text{and}~\Delta_ju=\varphi(2^{-j}\mathcal{D})u=\mathcal{F}^{-1}(\varphi(2^{-j}\cdot)\mathcal{F}u), ~\text{if}~ j\geq 0.$$
		The nonhomogeneous low-frequency cut-off operator $S_j$ is defined by
		$$
		S_j u=\sum_{j^\prime\leq j-1}\Delta_{j^\prime}u.$$
		\item  According to Young's inequality, we get
		$$\|\Delta_j u\|_{L^p}, \|S_j u\|_{L^p}\leq C\|u\|_{L^p}, ~\forall~ 1\leq p\leq \infty$$
		where the constant $C$ is independent of $j$ and $p$.
		\item The Littlewood-Paley decomposition is given as follows:
		$$ \rm{Id}=\sum_{j\in\mathbb{Z}}\Delta_j.$$ 
	\end{enumerate}
\end{rema}

	Now, we introduce the definition of nonhomogenous Besov spaces.
\begin{defi}{\rm\cite{BCD}}
	Let $s \in \mathbb{R}$ and $1 \leq p, r \leq \infty$. The nonhomogeneous Besov space $B^s_{p,r}(\mathbb{R}^d)$ consists of all tempered distributions $u$ such that
	$$\|u\|_{B^s_{p,r}}\triangleq\Big\|(2^{js}\|\Delta_ju\|_{L^p})_{j\geq -1} \Big\|_{\ell^r(\mathbb{Z})}<\infty.$$
\end{defi}

	\begin{prop}{\rm\cite{BCD}}\label{basic}
		Let $s,s_1,s_2\in\mathbb{R},\ 1\leq p,p_1,p_2,r,r_1,r_2\leq\infty.$ 
		\begin{enumerate}[(1)]
		\item $B^s_{p,r}$ is a Banach space, and is continuously embedded in $\mathcal{S}'$. 
		\item If $r<\infty$, then $\lim\limits_{j\rightarrow\infty}\|S_j u-u\|_{B^s_{p,r}}=0$. If $p,r<\infty$, then $C_0^{\infty}$ is dense in $B^s_{p,r}$. 
		\item If $p_1\leq p_2$ and $r_1\leq r_2$, then $ B^s_{p_1,r_1}\hookrightarrow B^{s-d(\frac 1 {p_1}-\frac 1 {p_2})}_{p_2,r_2}. $
		If $s_1<s_2$, then the embedding $B^{s_2}_{p,r_2}\hookrightarrow B^{s_1}_{p,r_1}$ is locally compact. 
		\item For any $s>0$, $B^s_{p,r}\cap L^{\infty}$ is an algebra. Furthermore,  $B^s_{p,r}\hookrightarrow L^{\infty} \Leftrightarrow s>\frac d p\ \text{or}\ \{s=\frac d p,\ r=1\}.
			\quad $ 
		\item $B^s_{p,r}$ satisfies the Fatou property, namely, if $(u_n)_{n\in\mathbb{N}}$ is a bounded sequence of $B^s_{p,r}$, then an element $u\in B^s_{p,r}$ and a subsequence $(u_{n_k})_{k\in\mathbb{N}}$ exist such that
			$$ \lim_{k\rightarrow\infty}u_{n_k}=u\ \text{in}\ \mathcal{S}'\quad \text{and}\quad \|u\|_{B^s_{p,r}}\leq C\liminf_{k\rightarrow\infty}\|u_{n_k}\|_{B^s_{p,r}}. $$
		\item Let $m\in\mathbb{R}$ and $f$ be a $S^m$- multiplier, (i.e. f is a smooth function and satisfies that for each multi-index $\alpha$, there exists $ C=C(\alpha)$ such that $|\partial^{\alpha}f(\xi)|\leq C(1+|\xi|)^{m-|\alpha|},\ \forall\xi\in\mathbb{R}^d)$. Then the operator $f(D)=\mathcal{F}^{-1}(f\mathcal{F}\cdot)$ is continuous from $B^s_{p,r}$ to $B^{s-m}_{p,r}$.
		\item Interpolation inequality:
			If $s_1<s_2$ and $\theta \in (0,1)$, then we have
			$$ \|u\|_{B^{\theta s_1+(1-\theta)s_2}_{p,r}}\leq \|u\|_{B^{s_1}_{p,r}}^{\theta}\|u\|_{B^{s_2}_{p,r}}^{1-\theta}. $$
		\item Logarithmic interpolation inequality: Let $\epsilon$ be in $(0, 1)$. A constant $C$ exists such that for any $f\in B^\epsilon_{\infty,\infty}$,
			\begin{equation*}
			\|f\|_{L^\infty}\leq \frac{C}{\epsilon}\|f\|_{B^0_{\infty,\infty}}\Big(1+\log\frac{\|f\|_{B^\epsilon_{\infty,\infty}}}{\|f\|_{B^0_{\infty,\infty}}}\Big).
			\end{equation*}
		\end{enumerate}
	\end{prop}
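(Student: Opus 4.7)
The entire proposition rests on the fact that the Littlewood--Paley projections $\Delta_j$ provide an almost isometric identification of $B^s_{p,r}$ with the weighted sequence space $\ell^r(2^{js};L^p)$, so every claim becomes a quantitative statement about the sequence $(2^{js}\|\Delta_j u\|_{L^p})_{j\geq -1}$. For (1), completeness is inherited from $\ell^r(L^p)$, and the embedding into $\mathcal{S}'$ is obtained by pairing $u=\sum_j\Delta_j u$ against a Schwartz function and using Bernstein to control $\|\Delta_j u\|_{L^\infty}\lesssim 2^{jd/p}\|\Delta_j u\|_{L^p}$ with a summable weight loss. Claim (2) reduces to a tail estimate in $\ell^r$ and a convolution/truncation argument to approximate by $C_0^\infty$. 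For (5), a diagonal extraction on each frequency block using weak-$*$ compactness in $L^p$, together with lower semicontinuity of the $\ell^r$ norm, yields the Fatou bound. Claim (6) reduces, after rescaling, to uniform $L^1$ bounds on $\mathcal{F}^{-1}(\varphi\cdot f(2^j\cdot))$ supplied by the $S^m$ symbol condition, and (7) is Hölder's inequality applied to the factorization $2^{j(\theta s_1+(1-\theta)s_2)}=(2^{js_1})^\theta(2^{js_2})^{1-\theta}$ followed by Hölder in $\ell^r$.

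\textbf{Embeddings in (3).} For the Besov embedding, I would apply Bernstein's second inequality blockwise to convert $L^{p_1}$ control into $L^{p_2}$ control at cost $2^{jd(1/p_1-1/p_2)}$, then use the monotone inclusion $\ell^{r_1}\hookrightarrow\ell^{r_2}$. Local compactness when $s_1<s_2$ is a two-scale argument: on a fixed ball and for frequencies $j\leq N$, the higher-regularity bound combined with Bernstein gives equicontinuity so Rellich--Kondrachov furnishes precompactness; the tail $j>N$ is uniformly $O(2^{-N(s_2-s_1)})$ in $B^{s_1}_{p,r_1}$, and letting $N\to\infty$ concludes the diagonal extraction.

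\textbf{Algebra and $L^\infty$ embedding (4).} This is the technically most demanding item, and I would route it through Bony's paraproduct decomposition $uv=T_uv+T_vu+R(u,v)$. For $s>0$ the paraproduct $T_uv$ is controlled by $\|u\|_{L^\infty}\|v\|_{B^s_{p,r}}$ using that $S_{j-1}u\cdot\Delta_j v$ is frequency-localized near $2^j$; the piece $T_vu$ is symmetric, while the remainder $R(u,v)$ requires the positivity of $s$ to sum a geometric series in $j$. The embedding $B^{d/p}_{p,1}\hookrightarrow L^\infty$ is then a direct Bernstein summation, and sharpness of the criterion on the critical line $s=d/p$ with $r>1$ is witnessed by the standard lacunary family $\sum_j 2^{-jd/p}\bigl(\chi(2^{-j}x)-\chi(2^{-j+1}x)\bigr)$.

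\textbf{Logarithmic interpolation (8).} I would split $\|f\|_{L^\infty}\leq\sum_j\|\Delta_j f\|_{L^\infty}$ at a level $N$, bounding the low-frequency part by $(N+2)\|f\|_{B^0_{\infty,\infty}}$ and the high-frequency part by $\sum_{j>N}2^{-j\epsilon}\cdot 2^{j\epsilon}\|\Delta_j f\|_{L^\infty}\leq C\epsilon^{-1}2^{-N\epsilon}\|f\|_{B^\epsilon_{\infty,\infty}}$, where the $\epsilon^{-1}$ factor arises from the geometric-series denominator $1-2^{-\epsilon}\sim\epsilon\log 2$. Optimizing $N\sim\epsilon^{-1}\log\bigl(\|f\|_{B^\epsilon_{\infty,\infty}}/\|f\|_{B^0_{\infty,\infty}}\bigr)$ balances the two contributions and yields the stated logarithmic bound. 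The two places I expect genuine difficulty are the careful bookkeeping in Bony's decomposition needed for (4) and the sharp $\epsilon$-tracking in (8); the remaining items are essentially direct consequences of Bernstein's inequalities together with basic functional analysis on $\ell^r(L^p)$.
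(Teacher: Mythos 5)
The paper supplies no proof of this proposition: it is background material quoted directly from Bahouri--Chemin--Danchin \cite{BCD}, and your sketch follows precisely the standard arguments of that reference (blockwise reduction to $\ell^r(2^{js};L^p)$, Bernstein for the embeddings, Bony's decomposition for the algebra property, rescaled $L^1$ kernel bounds for the $S^m$-multiplier, H\"older for interpolation, and the $N$-splitting with $N\sim\epsilon^{-1}\log\bigl(\|f\|_{B^\epsilon_{\infty,\infty}}/\|f\|_{B^0_{\infty,\infty}}\bigr)$ for the logarithmic inequality). All eight items are argued correctly in outline, with one concrete slip: the family $\sum_j 2^{-jd/p}\bigl(\chi(2^{-j}x)-\chi(2^{-j+1}x)\bigr)$ you offer to witness the failure of $B^{d/p}_{p,r}\hookrightarrow L^{\infty}$ for $r>1$ is \emph{spatially} rescaled, so each term is concentrated at low frequencies $|\xi|\lesssim 2^{-j}$; the whole sum lives in finitely many dyadic blocks and is in fact bounded, so it witnesses nothing. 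The correct witness is frequency-lacunary: take $h$ with $\widehat{h}$ supported in the unit annulus and $h(0)>0$, and set $u=\sum_{j\geq 1}a_j\,h(2^jx)$ with nonnegative $(a_j)\in\ell^r\setminus\ell^1$; then $2^{jd/p}\|\Delta_j u\|_{L^p}\sim a_j$, so $u\in B^{d/p}_{p,r}$, while $S_Nu(0)\sim h(0)\sum_{j\lesssim N}a_j\to\infty$ rules out $u\in L^{\infty}$. With that substitution the sketch is a complete and faithful reproduction of the cited proofs.
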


	Note that $\mathcal{F}^{-1}(\frac{1}{(1+\xi^2)^2})=\frac{1}{4}e^{-|\cdot|}(1+|\cdot |)$ and $\mathcal{F}^{-1}(\frac{1}{1+\xi^2})=\frac{1}{2}e^{-|\cdot|}$. By virtue of Young’s inequality, direct calculations yield the following lemma. 
\begin{lemm} \label{bdd oper}
	Let $1\leq p\leq \infty$,  $P(D)=(1-\partial^2_x)^{-2}$ is an $S^{-4}$-multiplier and $ P_1(D)=(1-\partial^2_x)^{-1}$ is an $S^{-2}$-multiplier. Moreover, the operators $P(D),~\partial_x P(D), ~\partial_x^2P(D)$, $\partial_x^3P(D),~\partial_x^4P(D)$ and $P_1(D),$  $\partial_xP_1(D),~\partial_x^2P_1(D)$  are all bounded in $L^p(\mathbb{R})$.
\end{lemm}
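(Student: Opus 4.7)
\smallskip

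\noindent\textbf{Proof proposal.} My plan is to break the lemma into three parts: the $S^m$-multiplier claims, the boundedness of $P(D)$ and $P_1(D)$ themselves, and the boundedness of their (up to fourth) derivatives. For the first part I simply verify the symbol estimates directly: the symbols $(1+\xi^2)^{-2}$ and $(1+\xi^2)^{-1}$ are smooth, and an easy induction on the multi-index $\alpha$ (using Leibniz together with $|\partial_\xi^k(1+\xi^2)^{-1}|\lesssim (1+|\xi|)^{-2-k}$) yields
\[
\Bigl|\partial_\xi^\alpha (1+\xi^2)^{-2}\Bigr|\leq C_\alpha (1+|\xi|)^{-4-|\alpha|},\qquad \Bigl|\partial_\xi^\alpha (1+\xi^2)^{-1}\Bigr|\leq C_\alpha(1+|\xi|)^{-2-|\alpha|},
\]
which are exactly the $S^{-4}$- and $S^{-2}$-conditions.

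For $P(D)$ and $P_1(D)$ themselves, I would use the explicit kernels recalled just before the lemma: $P(D)f=K\ast f$ and $P_1(D)f=K_1\ast f$ with $K(x)=\tfrac14 e^{-|x|}(1+|x|)$ and $K_1(x)=\tfrac12 e^{-|x|}$. Both kernels are nonnegative and integrable with $\|K\|_{L^1}=1=\|K_1\|_{L^1}$, so Young's convolution inequality gives $\|P(D)f\|_{L^p}\le\|f\|_{L^p}$ and $\|P_1(D)f\|_{L^p}\le\|f\|_{L^p}$ for all $1\le p\le\infty$.

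For the derivatives, the most transparent approach is to reduce the higher-order cases to $P(D)$, $P_1(D)$, and the identity via the algebraic identity $\partial_x^{2}=-(1-\partial_x^{2})+1$. Applied twice this gives
\[
\partial_x^2 P_1(D)=-\mathrm{Id}+P_1(D),\qquad \partial_x^4 P(D)=\mathrm{Id}-2P_1(D)+P(D),
\]
so both are manifestly $L^p$-bounded once the previous step is established. For the remaining odd- or low-order cases, I would compute the kernels directly: differentiating $K_1$ and $K$ (pointwise, then checking that no jump occurs at $0$) yields
\[
\partial_x P_1(D)\leftrightarrow -\tfrac12\mathrm{sgn}(x)e^{-|x|},\quad \partial_x P(D)\leftrightarrow -\tfrac14 x e^{-|x|},\quad \partial_x^2 P(D)\leftrightarrow -\tfrac14(1-|x|)e^{-|x|},
\]
and one more differentiation (legitimate because $-\tfrac14(1-|x|)e^{-|x|}$ is continuous at the origin) gives $\partial_x^3 P(D)\leftrightarrow \tfrac14\mathrm{sgn}(x)(2-|x|)e^{-|x|}$. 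Each of these kernels lies in $L^1(\mathbb{R})$, so Young's inequality again provides the $L^p$ bound.

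The main obstacle is the bookkeeping at $x=0$: as one differentiates $K$ the kernel loses regularity, and at order $4$ (respectively order $2$ for $K_1$) the distributional derivative picks up a Dirac mass, which would ruin a naive ``kernel in $L^1$'' argument. That is exactly why I prefer the algebraic identity for $\partial_x^4 P(D)$ and $\partial_x^2 P_1(D)$; alternatively, one can verify consistency by noting that the distributional derivative of $\tfrac14\mathrm{sgn}(x)(2-|x|)e^{-|x|}$ equals $\tfrac14(|x|-3)e^{-|x|}+\delta_0$, matching $P(D)-2P_1(D)+\mathrm{Id}$ and confirming the identity. All other steps are routine computations with exponential kernels.
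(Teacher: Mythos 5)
Your proposal is correct and follows essentially the same route as the paper, which simply notes the explicit kernels $\mathcal{F}^{-1}((1+\xi^2)^{-2})=\tfrac14 e^{-|\cdot|}(1+|\cdot|)$ and $\mathcal{F}^{-1}((1+\xi^2)^{-1})=\tfrac12 e^{-|\cdot|}$ and invokes Young's inequality, leaving the "direct calculations" unstated. Your additional care at the top orders (handling the Dirac mass via $\partial_x^2 P_1(D)=P_1(D)-\mathrm{Id}$ and $\partial_x^4P(D)=\mathrm{Id}-2P_1(D)+P(D)$) is exactly the bookkeeping the paper glosses over, and all of your kernel computations check out.
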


	\begin{prop}{\rm\cite{BCD}}\label{duality}
		Let $s\in\mathbb{R},\ 1\leq p,r\leq\infty.$
		\begin{equation*}\left\{
			\begin{array}{l}
				B^s_{p,r}\times B^{-s}_{p',r'}\longrightarrow\mathbb{R},  \\
				(u,\phi)\longmapsto \sum\limits_{|j-j'|\leq 1}\langle \Delta_j u,\Delta_{j'}\phi\rangle,
			\end{array}\right.
		\end{equation*}
		defines a continuous bilinear functional on $B^s_{p,r}\times B^{-s}_{p',r'}$. Denoted by $Q^{-s}_{p',r'}$ the set of functions $\phi$ in $\mathcal{S}$ such that
		$\|\phi\|_{B^{-s}_{p',r'}}\leq 1$. If $u\in \mathcal{S}'$, then we have
		$$\|u\|_{B^s_{p,r}}\leq C\sup_{\phi\in Q^{-s}_{p',r'}}\langle u,\phi\rangle.$$
	\end{prop}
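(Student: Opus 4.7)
The statement has two parts. First I would verify continuity of the bilinear form, then I would establish the norm identity. For the first part, the crucial observation comes from the Fourier support condition in Proposition \ref{prop}: since $\widehat{\Delta_j u}$ is supported in $2^j\mathcal{C}$ (and $\widehat{\Delta_{-1}u}$ in $B(0,4/3)$), Parseval's identity gives $\langle\Delta_j u,\Delta_{j'}\phi\rangle=0$ whenever $|j-j'|\geq 2$, so the sum is genuinely finite in $j'$ for each $j$. Applying Hölder's inequality in $L^p/L^{p'}$, then Hölder in $\ell^r/\ell^{r'}$, and absorbing the finitely many factors $2^{(j'-j)s}$ with $|j-j'|\le 1$ into a constant $C=C(s)$, I would get
\begin{equation*}
\Big|\sum_{|j-j'|\leq 1}\langle \Delta_j u,\Delta_{j'}\phi\rangle\Big|\le C\sum_{|j-j'|\leq 1}2^{js}\|\Delta_j u\|_{L^p}\,2^{-j's}\|\Delta_{j'}\phi\|_{L^{p'}}\le C\|u\|_{B^s_{p,r}}\|\phi\|_{B^{-s}_{p',r'}}.
\end{equation*}
This proves continuity. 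One also observes that when $u\in\mathcal{S}'$ and $\phi\in\mathcal{S}$, the sum is nothing but $\langle u,\phi\rangle$, since $\sum_j\Delta_j u$ converges to $u$ in $\mathcal{S}'$ and the finiteness in $j'$ lets one rearrange.

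For the norm identity, the inequality $\sup_{\phi\in Q^{-s}_{p',r'}}\langle u,\phi\rangle\le C\|u\|_{B^s_{p,r}}$ follows at once from the above. The real content is the reverse estimate. The plan is to realize $\|u\|_{B^s_{p,r}}$ as the supremum by combining two classical duality statements: the $L^p$--$L^{p'}$ duality applied blockwise, and the $\ell^r$--$\ell^{r'}$ duality applied to the sequence $(2^{js}\|\Delta_ju\|_{L^p})_j$. Concretely, given $\eta>0$, I would pick a sequence $(b_j)\in\ell^{r'}$ with $\|(b_j)\|_{\ell^{r'}}\le 1$ and
\begin{equation*}
\sum_j b_j\,2^{js}\|\Delta_j u\|_{L^p}\ge \|u\|_{B^s_{p,r}}-\eta,
\end{equation*}
and for each $j$ pick $\psi_j\in L^{p'}$ (actually in $\mathcal{S}$ after a smoothing step) with $\|\psi_j\|_{L^{p'}}\le 1$ and $\langle\Delta_j u,\psi_j\rangle\ge (1-\eta)\|\Delta_j u\|_{L^p}$. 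I would then set
\begin{equation*}
\phi:=\sum_{j}b_j\,2^{js}\,\widetilde{\Delta}_j\psi_j,
\end{equation*}
where $\widetilde{\Delta}_j$ is a fattened block operator based on a bump $\widetilde{\varphi}$ equal to $1$ on $\operatorname{supp}\varphi$ (and an analogous device at $j=-1$), so that $\Delta_j\widetilde{\Delta}_j=\Delta_j$ and $\Delta_{j'}\widetilde{\Delta}_j=0$ for $|j'-j|\ge 2$. A direct calculation then shows $\|\phi\|_{B^{-s}_{p',r'}}\le C$ (the factor $2^{js}$ on the synthesis side is balanced by $2^{-j's}$ in the norm, with $|j-j'|\le 1$), while $\langle u,\phi\rangle\ge (1-\eta)(\|u\|_{B^s_{p,r}}-\eta)$.

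The routine part is the two Hölder estimates; the main obstacle is the construction of the test function $\phi$, because one needs $\Delta_{j'}\phi$ to essentially reproduce $\psi_j$ on each dyadic scale while keeping $\|\phi\|_{B^{-s}_{p',r'}}$ bounded, and this forces the use of the fattened projector $\widetilde{\Delta}_j$ together with the almost-orthogonality $|j-j'|\le 1$. A secondary technical point is handling the endpoint cases $p=\infty$ or $r=\infty$: there one cannot achieve the sup by a maximizer but one can still approach it by $\eta$-almost maximizers in $L^{p'}$ and $\ell^{r'}$, and since the statement only claims the sup characterization up to a constant $C$, this suffices. Finally, a density/mollification step is needed to ensure $\phi$ can be taken in $\mathcal{S}$, which is standard given the spectral localization of each summand.
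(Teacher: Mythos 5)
The paper offers no proof of this proposition: it is quoted directly from [BCD], so there is nothing internal to compare against. Your proposal reconstructs the standard textbook argument, and it is essentially sound. The continuity half (Parseval plus disjointness of the dyadic annuli to reduce to $|j-j'|\le 1$, then H\"older in $L^p/L^{p'}$ followed by H\"older in $\ell^r/\ell^{r'}$, absorbing $2^{(j'-j)s}$ into a constant) is exactly right, and the reverse bound via blockwise $L^{p'}$-duality combined with $\ell^{r'}$-duality, using $\eta$-almost maximizers so that the endpoints $p=\infty$ and $r=\infty$ are covered, is the correct strategy and the one used in [BCD].

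The one step that does not close as written is the synthesis of the test function. With $\phi=\sum_j b_j 2^{js}\widetilde{\Delta}_j\psi_j$, self-adjointness of the Fourier multiplier gives $\langle u,\phi\rangle=\sum_j b_j 2^{js}\langle \widetilde{\Delta}_j u,\psi_j\rangle$, whereas $\psi_j$ was chosen to nearly maximize the pairing against $\Delta_j u$; the identity $\Delta_j\widetilde{\Delta}_j=\Delta_j$ does not convert one pairing into the other, so the claimed lower bound $\langle u,\phi\rangle\ge(1-\eta)(\|u\|_{B^s_{p,r}}-\eta)$ does not follow from the stated choices. The repair is one line: drop the fattened projector and set $\phi=\sum_j b_j 2^{js}\Delta_j\psi_j$. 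Then $\langle u,\Delta_j\psi_j\rangle=\langle\Delta_j u,\psi_j\rangle$ exactly, the almost-orthogonality $\Delta_{j'}\Delta_j=0$ for $|j-j'|\ge 2$ still yields $2^{-j's}\|\Delta_{j'}\phi\|_{L^{p'}}\le C\sum_{|j-j'|\le 1}|b_j|$ and hence $\|\phi\|_{B^{-s}_{p',r'}}\le C$, and taking $\psi_j\in C_c^\infty$ and $(b_j)$ finitely supported (a single index when $r=\infty$) makes $\phi$ a finite sum of Schwartz functions, so $\phi\in\mathcal{S}$ as required. With that substitution your argument is complete.
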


	Let us now establish some fundamental estimates that will be frequently employed in the proofs of later theorems. 
	\begin{lemm}\label{Moser}{\rm\cite{BCD,Danchin2001,He2017}}
	\begin{enumerate}[(1)]
		\item For any $s>0$ and any $(p,r)$ in $[1,\infty]^2$, there exists a constant $C=C(s,d)$ such that
		$$ \|uv\|_{B^s_{p,r}}\leq C(\|u\|_{L^{\infty}}\|v\|_{B^s_{p,r}}+\|u\|_{B^s_{p,r}}\|v\|_{L^{\infty}}). $$
		\item If $1\leq p,r\leq \infty,\ s_1\leq s_2,\ s_2>\frac{d}{p} (s_2 \geq \frac{d}{p}\ \text{if}\ r=1)$ and $s_1+s_2>\max(0, \frac{2d}{p}-d)$, there exists a constant $C>0$ such that
		$$ \|uv\|_{B^{s_1}_{p,r}}\leq C\|u\|_{B^{s_1}_{p,r}}\|v\|_{B^{s_2}_{p,r}}. $$
		\item If $1\leq p,r\leq \infty,  s>\max(\frac{1}{2},\frac{1}{p})$, there exists a constant $C>0$ such that
		$$\|uv\|_{B^{s-1}_{p,r}(\mathbb{R})}\leq C\|u\|_{B^{s-1}_{p,r}(\mathbb{R})}\|v\|_{B^{s}_{p,r}(\mathbb{R})},$$
		$$ \|uv\|_{B^s_{p,r}(\mathbb{R})}\leq C\|u\|_{B^{s+1}_{p,r}(\mathbb{R})}\|v\|_{B^s_{p,r}(\mathbb{R})}.$$
	\end{enumerate}
\end{lemm}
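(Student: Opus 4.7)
All three estimates are consequences of the Bony decomposition
$$uv = T_u v + T_v u + R(u,v),$$
combined with the continuity properties of the paraproduct $T$ and the remainder $R$. My plan is to treat each of the three pieces of this decomposition separately and to tune the choice of auxiliary indices to the regularity setting in each of the three parts.

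For part (1), I would invoke the classical paraproduct bound $\|T_u v\|_{B^s_{p,r}} \lesssim \|u\|_{L^\infty}\|v\|_{B^s_{p,r}}$ (and its symmetric counterpart $\|T_v u\|_{B^s_{p,r}} \lesssim \|v\|_{L^\infty}\|u\|_{B^s_{p,r}}$), together with the remainder estimate with indices $s_1 = 0$ and $s_2 = s$, putting one factor in $L^\infty$ and the other in $B^s_{p,r}$. The hypothesis $s > 0$ is used precisely so that the condition $s_1 + s_2 > 0$ needed for the remainder sum to converge in $B^s_{p,r}$ is met. For part (2), the hypothesis $s_2 > d/p$ (or $s_2 = d/p, r = 1$) delivers the embedding $B^{s_2}_{p,r} \hookrightarrow L^\infty$, so $T_u v$ and $T_v u$ are controlled via part (1), while $R(u,v)$ is handled by the remainder bound under $s_1 + s_2 > \max(0, 2d/p - d)$; the latter threshold encodes the scaling correction required when one estimates the product in $L^{p/2}$ before reinjecting into $B^{s_1}_{p,r}$.

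Part (3) is the most delicate one because the target index $s-1$ may be negative. For the first inequality $\|uv\|_{B^{s-1}_{p,r}} \lesssim \|u\|_{B^{s-1}_{p,r}}\|v\|_{B^s_{p,r}}$, the one-dimensional embedding $B^s_{p,r} \hookrightarrow L^\infty$ (valid since $s > 1/p$) lets me bound $T_v u$ by $\|v\|_{L^\infty}\|u\|_{B^{s-1}_{p,r}}$, and the remainder $R(u,v)$ by the product estimate with $s_1 = s-1$, $s_2 = s$, which requires $(s-1)+s > 0$, i.e.\ $s > 1/2$, exactly the stated hypothesis. For $T_u v$, in which the regularity of the first factor is negative, I would use the refined paraproduct estimate
$$\|T_u v\|_{B^{s-1}_{p,r}} \lesssim \|u\|_{B^{s-1-1/p}_{\infty,\infty}}\|v\|_{B^s_{p,r}}$$
and close the argument via the embedding $B^{s-1}_{p,r} \hookrightarrow B^{s-1-1/p}_{\infty,\infty}$ (which holds for $p \geq 1$ in dimension one). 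The second inequality of part (3) follows immediately from part (1) once one notes that $s+1 > 1 \geq 1/p$ gives $\|u\|_{L^\infty} \lesssim \|u\|_{B^{s+1}_{p,r}}$ and $s > 1/p$ gives $\|v\|_{L^\infty} \lesssim \|v\|_{B^s_{p,r}}$.

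The main obstacle is the regime $1/p < s < 1$ within part (3): here $s-1 < 0$, so one cannot simply place one factor in $L^\infty$ and use Moser directly. The sharp assumption $s > \max(1/2, 1/p)$ is needed simultaneously on two fronts, namely $s > 1/p$ to secure the embedding $B^s_{p,r} \hookrightarrow L^\infty$ used in bounding $T_v u$, and $s > 1/2$ to force $s_1 + s_2 > 0$ in the remainder estimate. This combination of conditions is exactly what makes both the paraproduct and remainder pieces summable in $B^{s-1}_{p,r}$, and is the crux of the argument.
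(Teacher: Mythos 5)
The paper does not prove this lemma; it is quoted directly from \cite{BCD,Danchin2001,He2017}, and the Bony-decomposition argument you outline is precisely the standard proof given in those references, with the roles of $s>0$, $s_2>d/p$, $s_1+s_2>\max(0,\tfrac{2d}{p}-d)$, and $s>\max(\tfrac12,\tfrac1p)$ correctly identified for the paraproduct and remainder pieces. Your sketch is sound; the only place it is slightly informal is the remainder term in part (3), where the product of two $L^p$-based blocks lands in $L^{p/2}$ and one must either put one factor in an $L^\infty$-based Besov space or embed back from $B^{2s-1}_{p/2,r}$ into $B^{s-1}_{p,r}$ -- but this is exactly the routine bookkeeping in which the two thresholds $s>\tfrac12$ and $s>\tfrac1p$ are consumed, as you note.
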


	Now we state some useful results in the transport equation theory, which are important to the proofs of our  theorems later.
	\begin{equation}\label{transport}
		\left\{\begin{array}{l}
			f_t+v\cdot\nabla f=g,\ x\in\mathbb{R}^d,\ t>0, \\
			f(0,x)=f_0(x).
		\end{array}\right.
	\end{equation}
	\begin{lemm}\label{existence}{\rm\cite{BCD}}
		Let $1\leq p\leq p_1\leq\infty,\ 1\leq r\leq\infty,\ s> -d\min(\frac 1 {p_1}, \frac 1 {p'})$. Let $f_0\in B^s_{p,r}$, $g\in L^1([0,T];B^s_{p,r})$, and let $v$ be a time-dependent vector field such that $v\in L^\rho([0,T];B^{-M}_{\infty,\infty})$ for some $\rho>1$ and $M>0$, and
		$$
		\begin{array}{ll}
			\nabla v\in L^1([0,T];B^{\frac d {p_1}}_{p_1,\infty}\cap L^\infty), &\ \text{if}\ s<1+\frac d {p_1}, \\
			\nabla v\in L^1([0,T];B^{s-1}_{p_1,r}), &\ \text{if}\ s>1+\frac d {p_1}\ \text{or}\ \{s=1+\frac d {p_1}\ \text{and}\ r=1\}.
		\end{array}
		$$
		Then the equation \eqref{transport} has a unique solution $f$ in \\
		-the space $C([0,T];B^s_{p,r})$, if $r<\infty$; \\
		-the space $\Big(\bigcap_{s'<s}C([0,T];B^{s'}_{p,\infty})\Big)\bigcap C_w([0,T];B^s_{p,\infty})$, if $r=\infty$.
	\end{lemm}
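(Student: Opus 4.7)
The plan is to combine a Littlewood--Paley / energy argument with a commutator bound to get a priori estimates in $B^s_{p,r}$, and then produce the solution through a Friedrichs regularisation (or iteration) scheme.

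First I would apply $\Delta_j$ to \eqref{transport} and rewrite the resulting equation as
\begin{equation*}
\partial_t \Delta_j f + v\cdot\nabla \Delta_j f = \Delta_j g + [v\cdot\nabla,\Delta_j]f.
\end{equation*}
Multiplying by $|\Delta_j f|^{p-2}\Delta_j f$ and integrating (using $\mathrm{div}\, v$ estimates from $\nabla v\in L^1_T L^\infty$ when $p<\infty$, or an $L^\infty$ characteristic argument when $p=\infty$) gives an $L^p$ energy estimate
\begin{equation*}
\|\Delta_j f(t)\|_{L^p}\leq \|\Delta_j f_0\|_{L^p}+\int_0^t\!\bigl(\|\Delta_j g\|_{L^p}+\|[v\cdot\nabla,\Delta_j]f\|_{L^p}\bigr)d\tau.
\end{equation*}
Multiplying by $2^{js}$, taking $\ell^r$ in $j$, and invoking a commutator estimate of the form
\begin{equation*}
\bigl\|\bigl(2^{js}\|[v\cdot\nabla,\Delta_j]f\|_{L^p}\bigr)_j\bigr\|_{\ell^r}\leq C\,V'(t)\,\|f\|_{B^s_{p,r}},
\end{equation*}
where $V'(t)=\|\nabla v\|_{B^{d/p_1}_{p_1,\infty}\cap L^\infty}$ in the low-regularity range $s<1+d/p_1$ and $V'(t)=\|\nabla v\|_{B^{s-1}_{p_1,r}}$ in the high-regularity range, yields (after Gronwall) the key a priori bound
\begin{equation*}
\|f(t)\|_{B^s_{p,r}}\leq e^{CV(t)}\Bigl(\|f_0\|_{B^s_{p,r}}+\int_0^t e^{-CV(\tau)}\|g(\tau)\|_{B^s_{p,r}}\,d\tau\Bigr),\qquad V(t)=\int_0^t V'(\tau)d\tau.
\end{equation*}
The role of the hypothesis $s>-d\min(1/p_1,1/p')$ is precisely to make the commutator estimate valid, via Bony decomposition: one splits $[v\cdot\nabla,\Delta_j]f$ into paraproducts and remainders, and this lower bound on $s$ ensures that the remainder piece lies in the right Besov scale.

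For existence I would smooth the data by setting $f_0^n=S_n f_0$, $g^n=S_n g$, $v^n=S_n v$ (all of which live in nicer spaces), solve the regularised problem by standard Cauchy--Lipschitz in $B^s_{p,r}\cap B^\infty$ (or via the method of characteristics, since $v^n$ is smooth), and observe that the a priori estimate above is uniform in $n$. The hypothesis $v\in L^\rho_T B^{-M}_{\infty,\infty}$ together with the uniform bound lets one pass to the limit in $\mathcal{S}'$, and the Fatou property of $B^s_{p,r}$ (Proposition \ref{basic}(5)) delivers $f\in L^\infty_T B^s_{p,r}$. Time-continuity when $r<\infty$ follows from the density of $S_n f$ and continuity of the flow, while for $r=\infty$ one only gets weak-$\ast$ continuity, recovering strong continuity at every lower regularity $s'<s$ by interpolation. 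Uniqueness is immediate: the difference of two solutions satisfies \eqref{transport} with $f_0=g=0$, and, by applying the same a priori estimate at a slightly lower regularity where the commutator bound is available for free, Gronwall forces it to vanish.

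The main obstacle is the commutator estimate, and in particular the dichotomy between the two regimes of $s$ in the statement. In the low-regularity regime one uses only $\nabla v\in L^1_T(B^{d/p_1}_{p_1,\infty}\cap L^\infty)$ but must control the remainder in Bony's decomposition, which forces the restriction $s<1+d/p_1$; at the borderline $s=1+d/p_1$ with $r>1$ the endpoint fails, explaining why the stronger assumption $\nabla v\in L^1_T B^{s-1}_{p_1,r}$ is imposed in the high-regularity regime, where one instead estimates each paraproduct separately using the Moser-type bounds of Lemma \ref{Moser}. Handling these two cases uniformly, and checking the $p=\infty$ endpoint where the $L^p$ energy method degenerates and must be replaced by a Lagrangian argument along the flow of $v$, is where most of the technical work is concentrated.
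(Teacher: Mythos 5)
The paper does not prove this lemma; it is quoted verbatim from the reference [BCD] (Bahouri--Chemin--Danchin), and your proposal correctly reconstructs the standard argument given there: dyadic $L^p$ energy estimates plus the commutator bound (whose validity is exactly what the hypothesis $s>-d\min(\frac{1}{p_1},\frac{1}{p'})$ and the two regimes for $\nabla v$ guarantee), Gronwall, Friedrichs regularisation with the Fatou property for existence, and the a priori estimate applied to the difference of two solutions for uniqueness. The only cosmetic remark is that uniqueness needs no drop in regularity: the difference of two solutions already lies in $L^\infty_T B^s_{p,r}$ and solves the equation with zero data and zero source, so the estimate at level $s$ itself forces it to vanish.
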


\begin{lemm}[A priori estimates in Besov spaces]\label{BCD Thm3.14}{\rm\cite{BCD}}
		Let $s\in\mathbb{R},~1\leq p\leq p_1\leq\infty,~1\leq r\leq \infty$. Assume that
		$$ s> -d\min(\frac{1}{p_1},\frac{1}{p^\prime}).$$
		There exists a constant $C$, such that for all solutions $f\in L^{\infty}([0,T];B^s_{p,r})$ of \eqref{transport} with initial data $f_0\in B^s_{p,r}$,~$g\in L^1([0,T];B^s_{p,r})$ and 
		$$ \begin{array}{ll}
		\nabla v\in L^1([0,T];B^{\frac d {p_1}}_{p_1,\infty}\cap L^\infty), &\ \text{if}\ s<1+\frac d {p_1}, \\
		\nabla v\in L^1([0,T];B^{s-1}_{p_1,r}), &\ \text{if}\ s>1+\frac d {p_1}\ \text{or}\ \{s=1+\frac d {p_1}\ \text{and}\ r=1\},
		\end{array}
		$$
		we have, for a.e. $t\in [0,T],$
		$$ \|f(t)\|_{B^s_{p,r}}\leq e^{CV_{p_1}(t)}\Big(\|f_0\|_{B^s_{p,r}}+\int_0^t e^{-CV_{p_1}(t')}\|g(t')\|_{B^s_{p,r}}{\rm d}t'\Big)$$
		or
		$$ \|f(t)\|_{B^s_{p,r}}\leq \|f_0\|_{B^s_{p,r}}+\int_0^t \|g(t')\|_{B^s_{p,r}}dt'+\int_0^t CV'_{p_1} (t')\|f(t')\|_{B^s_{p,r}}dt{'} $$
		with
		\begin{equation*}
			V_{p_1}'(t)=\left\{\begin{array}{ll}
				
				\|\nabla v(t) \|_{B^{\frac{d}{p_1}}_{p_1,\infty}\cap L^\infty},\ &\text{if}\ s<1+\frac{d}{p_1},\\
				\|\nabla v(t)\|_{B^{s-1}_{p_1,r}},\ &\text{if}\ s>1+\frac{d}{p_1}\ {\rm or}\  \left\{ s=1+\frac{d}{p_1} \ {\rm and}\  r=1\right\}.
			\end{array}\right.
		\end{equation*}	
	\end{lemm}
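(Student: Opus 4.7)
The plan is to prove the a priori estimate by dyadically localizing the transport equation and combining an $L^p$ energy estimate with a commutator bound. Applying the nonhomogeneous dyadic block $\Delta_j$ to \eqref{transport} gives
\begin{equation*}
\partial_t \Delta_j f + v\cdot\nabla \Delta_j f = \Delta_j g + R_j,\qquad R_j \triangleq [v\cdot\nabla,\Delta_j]f,
\end{equation*}
so that $\Delta_j f$ solves a transport equation with forcing $\Delta_j g + R_j$. The first step is an $L^p$ energy estimate on this localized equation: multiplying by $|\Delta_j f|^{p-2}\Delta_j f$ (with the usual adaptations for $p=1,\infty$) and integrating by parts absorbs the transport term at the cost of $\|{\rm div}\,v\|_{L^\infty}\|\Delta_j f\|_{L^p}$, yielding
\begin{equation*}
\|\Delta_j f(t)\|_{L^p}\leq \|\Delta_j f_0\|_{L^p}+\int_0^t\!\Bigl(\|\Delta_j g(\tau)\|_{L^p}+\|R_j(\tau)\|_{L^p}+\tfrac1p\|{\rm div}\,v(\tau)\|_{L^\infty}\|\Delta_j f(\tau)\|_{L^p}\Bigr)\,d\tau.
\end{equation*}

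Next I would multiply by $2^{js}$ and take the $\ell^r(\mathbb{Z})$ norm. The difficulty—this is where the regularity hypotheses on $\nabla v$ enter—is to control the commutator term uniformly in $j$ by
\begin{equation*}
\bigl\|(2^{js}\|R_j\|_{L^p})_j\bigr\|_{\ell^r}\leq C\,V'_{p_1}(t)\,\|f\|_{B^s_{p,r}},
\end{equation*}
with $V'_{p_1}$ as in the statement. This is the classical Danchin commutator estimate, proved by decomposing $[v\cdot\nabla,\Delta_j]f$ through Bony's paraproduct formula into three pieces ($T_{\nabla v}\Delta_j f$-type, $T_{\Delta_j f}\nabla v$-type, and a remainder), then using continuity of paraproduct/remainder operators on Besov spaces; the case split $s<1+d/p_1$ versus $s>1+d/p_1$ (or the endpoint with $r=1$) corresponds exactly to which paraproduct estimate places $\nabla v$ in $B^{d/p_1}_{p_1,\infty}\cap L^\infty$ or in $B^{s-1}_{p_1,r}$. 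The restriction $s>-d\min(1/p_1,1/p')$ is what ensures the remainder term lands in a space with nonnegative index and can be controlled.

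Combining these two ingredients produces the integral inequality
\begin{equation*}
\|f(t)\|_{B^s_{p,r}}\leq \|f_0\|_{B^s_{p,r}}+\int_0^t\|g(\tau)\|_{B^s_{p,r}}\,d\tau+\int_0^t CV'_{p_1}(\tau)\|f(\tau)\|_{B^s_{p,r}}\,d\tau,
\end{equation*}
which is exactly the second inequality in the lemma. Applying Grönwall's inequality with $V_{p_1}(t)=\int_0^t V'_{p_1}(\tau)\,d\tau$ then yields the first (exponential) form of the estimate. The only subtle point in the whole argument is the low-regularity assumption $v\in L^\rho([0,T];B^{-M}_{\infty,\infty})$, which is needed only to justify that the product $v\cdot\nabla f$ makes sense distributionally and that $\Delta_j f$ satisfies the localized equation pointwise in time; the quantitative estimate above depends solely on $V'_{p_1}$. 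I expect the main obstacle to be a clean derivation of the commutator bound, which requires careful bookkeeping of paraproduct and remainder indices under the two regimes of $s$.
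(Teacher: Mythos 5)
Your proposal is a correct outline of the standard argument: the paper states this lemma as a cited result from [BCD] without reproducing its proof, and your route (dyadic localization, $L^p$ energy estimate with the $\frac1p\|\operatorname{div}v\|_{L^\infty}$ loss, the Bony-decomposition commutator bound split according to $s<1+d/p_1$ versus $s>1+d/p_1$, then Gr\"onwall) is exactly the one in that reference, and is also the same machinery the paper itself deploys in Lemma \ref{lem4.1} and the proof of Theorem \ref{thm4.2}. The only minor imprecision is attributing the hypothesis $v\in L^\rho([0,T];B^{-M}_{\infty,\infty})$ to this lemma — it belongs to the existence statement (Lemma \ref{existence}), not to the a priori estimate — but this does not affect the argument.
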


	For the 1-D linear transport equation, the prior estimates were improved in Reference \cite{Li.J2016} as follows.
	\begin{lemm}\label{Li}{\rm\cite{Li.J2016}}
		Let $s\in\mathbb{R},~1\leq p,r\leq\infty$. Assume that
		$$ s> \max(\frac{1}{2},\frac{1}{p}).$$
		There exists a constant $C$, such that for all solutions $f\in L^{\infty}([0,T];B^{s-1}_{p,r}(\mathbb{R}))$ of \eqref{transport} with initial data $f_0\in B^{s-1}_{p,r}(\mathbb{R})$, $g\in L^1([0,T];B^{s-1}_{p,r}(\mathbb{R}))$ and $\partial_{x} v\in L^1([0,T];B^{s}_{p,r}(\mathbb{R}))$, we have, for a.e. $t\in [0,T],$
		$$ \|f(t)\|_{B^{s-1}_{p,r}}\leq e^{CV(t)}\Big(\|f_0\|_{B^{s-1}_{p,r}}+\int_0^t e^{-CV(t')}\|g(t')\|_{B^{s-1}_{p,r}}{\rm d}t'\Big),$$
		with
		\begin{equation*}
		V(t)=\int_{0}^t\|\partial_{x}v(t')\|_{B^{s}_{p,r}(\mathbb{R})}dt'.
		\end{equation*}	
	\end{lemm}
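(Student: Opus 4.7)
The approach is the classical dyadic energy method for transport equations, sharpened by the one-dimensional Moser product estimate recorded in Lemma \ref{Moser}(3). First I would localize: applying $\Delta_j$ to \eqref{transport} yields
\begin{equation*}
\partial_t(\Delta_j f) + v\,\partial_x(\Delta_j f) \;=\; \Delta_j g + R_j, \qquad R_j := v\,\Delta_j\partial_x f - \Delta_j(v\,\partial_x f),
\end{equation*}
and an $L^p$ energy estimate (multiplying by $|\Delta_j f|^{p-2}\Delta_j f$ and integrating, with the standard modifications at $p=1,\infty$) absorbs the transport term up to an integration-by-parts contribution of order $\|\partial_x v\|_{L^\infty}\|\Delta_j f\|_{L^p}$, giving
\begin{equation*}
\|\Delta_j f(t)\|_{L^p} \le \|\Delta_j f_0\|_{L^p} + \int_0^t\!\Big(C\|\partial_x v\|_{L^\infty}\|\Delta_j f\|_{L^p} + \|\Delta_j g\|_{L^p} + \|R_j\|_{L^p}\Big)\,d\tau.
\end{equation*}

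Second, the heart of the argument is the sharp commutator estimate
\begin{equation*}
\Big\|\big(2^{j(s-1)}\|R_j\|_{L^p}\big)_{j\ge -1}\Big\|_{\ell^r(\mathbb{Z})} \le C\,\|\partial_x v\|_{B^s_{p,r}(\mathbb{R})}\,\|f\|_{B^{s-1}_{p,r}(\mathbb{R})}.
\end{equation*}
I would decompose $v\,\partial_x f$ via Bony's paraproduct formula as $T_v\partial_x f + T_{\partial_x f}v + R(v,\partial_x f)$ and split $R_j$ accordingly. The kernel identity $R_j(x)=\int h_j(y)\,[v(x)-v(x-y)]\,\partial_x f(x-y)\,dy$ combined with a first-order Taylor expansion $v(x)-v(x-y)=\int_0^1 y\,\partial_x v(x-\theta y)\,d\theta$ extracts a factor $2^{-j}$, so $R_j$ has the effective regularity of $\partial_x v\cdot\partial_x f$ shifted down by one frequency level. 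Estimating each Bony piece then reduces to the one-dimensional product inequality of Lemma \ref{Moser}(3), namely $\|uw\|_{B^{s-1}_{p,r}}\le C\|u\|_{B^{s-1}_{p,r}}\|w\|_{B^s_{p,r}}$, valid precisely in the range $s>\max(\tfrac12,\tfrac1p)$. The factor related to $v$ supplies the $B^s_{p,r}$-norm, matching the assumed regularity $\partial_x v\in B^s_{p,r}$, while the factor related to $f$ produces the $B^{s-1}_{p,r}$-norm; the low-frequency block $\Delta_{-1}R_j$ is handled directly by $\|v\|_{L^\infty}\|f\|_{L^p}$ and the embedding $B^s_{p,r}\hookrightarrow L^\infty$ from Proposition \ref{basic}(4).

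Third, multiplying the energy inequality by $2^{j(s-1)}$, taking the $\ell^r$-norm in $j$, and using $B^s_{p,r}\hookrightarrow L^\infty$ once more to bound $\|\partial_x v\|_{L^\infty}\le C\|\partial_x v\|_{B^s_{p,r}}$ yields
\begin{equation*}
\|f(t)\|_{B^{s-1}_{p,r}} \le \|f_0\|_{B^{s-1}_{p,r}} + \int_0^t\!\|g(\tau)\|_{B^{s-1}_{p,r}}\,d\tau + C\!\int_0^t\!\|\partial_x v(\tau)\|_{B^s_{p,r}}\|f(\tau)\|_{B^{s-1}_{p,r}}\,d\tau,
\end{equation*}
whence Gr\"onwall's lemma gives the claimed exponential bound with $V(t)=\int_0^t\|\partial_x v(\tau)\|_{B^s_{p,r}}\,d\tau$. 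The main obstacle is the commutator estimate in Step 2: the textbook BCD bounds underlying Lemma \ref{BCD Thm3.14} require $s>1+\tfrac1p$ or the supplementary hypothesis $\partial_x v\in B^{1/p}_{p,\infty}\cap L^\infty$, and so cannot reach the low-regularity range $s\in(\max(\tfrac12,\tfrac1p),\,1+\tfrac1p]$ treated here. It is exactly the one-dimensional inequality Lemma \ref{Moser}(3) that lets us trade one derivative of $v$ (which we have, since the hypothesis is $\partial_x v\in B^s_{p,r}$ rather than $\partial_x v\in B^{s-1}_{p,r}$) for the missing regularity of $f$, and the technical core is bookkeeping the frequency configurations of each Bony piece of $R_j$ to confirm that this inequality is applicable there.
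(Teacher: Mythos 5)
The paper offers no proof of Lemma \ref{Li}: it is quoted verbatim from \cite{Li.J2016}, so there is no internal argument to compare yours against. Measured against the actual proof in that reference (and against the paper's own closely related Lemma \ref{lem4.1}), your plan follows the same route and is essentially correct: localize with $\Delta_j$, run the $L^p$ energy estimate, prove the commutator bound
$\bigl\|\bigl(2^{j(s-1)}\|[v\partial_x,\Delta_j]f\|_{L^p}\bigr)_{j}\bigr\|_{\ell^r}\le C\|\partial_x v\|_{B^s_{p,r}}\|f\|_{B^{s-1}_{p,r}}$
via Bony's decomposition, and conclude by Gr\"onwall. Your diagnosis of why the BCD estimate (Lemma \ref{BCD Thm3.14}) fails in the range $\max(\tfrac12,\tfrac1p)<s\le 1+\tfrac1p$ and why the hypothesis $\partial_xv\in B^{s}_{p,r}$ (one derivative above $f$) is what saves the day is exactly the point of the Li--Yin improvement.

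One place where the sketch is loose and deserves care if you write it out. The kernel identity with the first-order Taylor expansion, which "extracts a factor $2^{-j}$", only governs the paraproduct piece $[T_{v},\Delta_j]\partial_xf$, where $v$ sits at lower frequency than the output; it does not describe the pieces $T_{\partial_xf}v$, $\Delta_jT_{\partial_xf}v$, or the remainder terms $R(\partial_xv,f)$ and $\partial_xR(v,f)$, which is where the restriction $s>\max(\tfrac12,\tfrac1p)$ actually originates. For those, the naive H\"older split $L^\infty\times L^p$ (putting one factor in $L^\infty$ via Bernstein) only yields the strictly stronger condition $s>\tfrac12+\tfrac1{2p}$; to reach $s>\max(\tfrac12,\tfrac1p)$ one must use the balanced pairing $L^p\times L^p\to L^{p/2}$ (resp.\ $L^p\times L^{p'}\to L^1$ when $p<2$) followed by Bernstein at the \emph{output} frequency $j$ rather than the input frequency $k$. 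This is precisely the mechanism hidden inside Lemma \ref{Moser}(3), so your reduction "each Bony piece reduces to Lemma \ref{Moser}(3)" is the right move and the argument closes — but the Taylor-kernel heuristic alone would not. A last cosmetic point: the lowest block should be controlled through $\|\partial_xS_0v\|_{L^\infty}\lesssim\|\partial_xv\|_{B^s_{p,r}}$ (valid since $s>\tfrac1p$), not through $\|v\|_{L^\infty}$, which is not assumed finite.
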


	Let us consider the following initial value problem
	\begin{equation}\label{eqflow}
		\left\{\begin{array}{l}
			y_t(t,x)=(u^2+u_x^2)(t,y),~t\in [0,T),  \\
			y(0,x)=x,~x\in\mathbb{R}.
		\end{array}\right.
	\end{equation}
	By applying classical results in the theory of ordinary differential equations, we have the following lemma.
	\begin{lemm}{\rm\cite{Cons1}}
		 Let u $\in C( [ 0,T);H^s(\mathbb{R}))\cap  C^1([0,T);H^{s-1}(\mathbb{R})),~s\geq 2$ .Then \eqref{eqflow} has a unique solution $ y \in C^1([0,T)\times \mathbb{R};\mathbb{R})$. Moreover, the map $y(t,\cdot)$ is an increasing diffeomorphism of $\mathbb{R}$ with  
		$$y_x(t,x)=\exp\Big(\int_{0}^{t} (u^2+u_x^2)_x(s,y(s,x))ds\Big)>0,~\forall(t,x)\in [0,T)\times \mathbb{R}.$$ 
	\end{lemm}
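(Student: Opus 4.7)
The plan is to treat \eqref{eqflow} as an ordinary differential equation in $x$ with time-dependent velocity field $V(t,x):=u^2(t,x)+u_x^2(t,x)$, and apply classical Picard--Lindel\"of theory. First I would check the regularity of $V$: since $u\in C([0,T);H^s(\mathbb{R}))$ with $s\geq 2$, the Sobolev embedding $H^s(\mathbb{R})\hookrightarrow C^1_b(\mathbb{R})$ implies $u,u_x\in C([0,T);C^1_b(\mathbb{R}))$, hence $V$ and $V_x=2uu_x+2u_xu_{xx}$ are bounded and continuous on $[0,T')\times\mathbb{R}$ for every $T'<T$. In particular $V(t,\cdot)$ is globally Lipschitz in $x$, uniformly on compact subintervals of $[0,T)$.

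Next, I would apply the Cauchy--Lipschitz theorem for each initial data $x\in\mathbb{R}$ to obtain a unique local $C^1$-in-$t$ solution, and then observe that the uniform bound on $V$ on $[0,T']\times\mathbb{R}$ prevents finite-time blow-up of $y(t,x)$, so the solution exists on all of $[0,T)$. The standard ODE theorem on smooth dependence on initial data (applicable because both $V$ and $V_x$ are bounded and continuous) upgrades pointwise existence to joint regularity $y\in C^1([0,T)\times\mathbb{R};\mathbb{R})$.

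Third, differentiating the ODE in $x$ — which is justified by the regularity just obtained — gives the linear variational equation
\begin{equation*}
\partial_t y_x(t,x)=V_x(t,y(t,x))\,y_x(t,x),\qquad y_x(0,x)=1.
\end{equation*}
Integrating this explicitly yields the desired formula
\begin{equation*}
y_x(t,x)=\exp\Bigl(\int_0^t (u^2+u_x^2)_x(s,y(s,x))\,ds\Bigr)>0,
\end{equation*}
with positivity a direct consequence of the exponential form.

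Finally, the diffeomorphism property follows in two steps: strict positivity of $y_x$ makes $y(t,\cdot)$ strictly increasing, hence injective; and the identity $y(t,x)-x=\int_0^t V(s,y(s,x))\,ds$ together with the uniform bound $\|V\|_{L^\infty([0,T']\times\mathbb{R})}<\infty$ shows $y(t,x)\to\pm\infty$ as $x\to\pm\infty$, giving surjectivity. Combined with the $C^1$ regularity and a strictly positive derivative, this upgrades $y(t,\cdot)$ to a $C^1$-diffeomorphism of $\mathbb{R}$. There is no genuine obstacle here beyond confirming that $s\geq 2$ delivers precisely the boundedness of $u_{xx}$ needed for $V_x\in L^\infty$; once this is in hand, the argument is a direct application of standard ODE theory as already codified in \cite{Cons1}.
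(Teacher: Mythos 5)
Your overall strategy --- reading \eqref{eqflow} as an ODE with time-dependent velocity $V=u^{2}+u_{x}^{2}$, applying Picard--Lindel\"of, deriving the variational equation for $y_{x}$ and integrating it --- is exactly the classical argument behind this lemma; the paper offers no proof of its own and simply cites \cite{Cons1}, where the same scheme is carried out (there for the velocity field $u$ rather than $u^{2}+u_{x}^{2}$). The last two steps of your argument, the exponential formula for $y_x$ and the diffeomorphism property via strict monotonicity together with the uniform bound on $y(t,x)-x$, are fine.

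There is, however, a genuine gap in your first step, which you flag in your closing sentence and then wave away. The embedding $H^{s}(\mathbb{R})\hookrightarrow C^{1}_{b}(\mathbb{R})$ holds for $s>\frac{3}{2}$, so $s\geq 2$ gives $u\in C^{1}_{b}$; it does \emph{not} give $u_{x}\in C^{1}_{b}$, since that would require $u_{xx}\in L^{\infty}$, i.e.\ $s>\frac{5}{2}$. At $s=2$ one only has $u_{xx}\in L^{2}$, so $u_{x}$ is merely H\"older continuous of exponent $\frac{1}{2}$ (from $|u_{x}(x)-u_{x}(y)|\leq\|u_{xx}\|_{L^{2}}|x-y|^{1/2}$), hence so is $V=u^{2}+u_{x}^{2}$, and $V_{x}=2uu_{x}+2u_{x}u_{xx}$ need not be bounded. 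A H\"older-$\frac{1}{2}$ modulus of continuity does not satisfy the Osgood condition, so neither Picard--Lindel\"of nor the Osgood uniqueness theorem applies directly, and the pointwise formula for $y_{x}$ also loses meaning. The threshold $s\geq 2$ is the right one for the velocity field $u$ treated in \cite{Cons1}, but not automatically for $u^{2}+u_{x}^{2}$. The lemma is safe where the paper actually uses it (Theorem \ref{thm4.4}), because there $n\in C([0,T);B^{s}_{p,r})$ with $s>\max(\frac{1}{2},\frac{1}{p})$, whence $u_{xx}\in B^{s}_{p,r}\hookrightarrow L^{\infty}$ and $V$ is genuinely Lipschitz in $x$; but your proof, as written for the bare hypothesis $s\geq 2$, needs either the strengthened assumption $s>\frac{5}{2}$ or an explicit appeal to this extra regularity before the ODE machinery can be invoked.
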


   \begin{lemm}\label{continue}{\rm\cite{Li.J}}
   	Suppose that $1\leq p\leq\infty, 1\leq r<\infty,  s>\frac 1 p$ (or $s=\frac 1 p, 1<p<\infty, r=1)$. Denote $\bar{\mathbb{N}}=\mathbb{N}\cup\{\infty\}$. Let $(v^n)_{n\in\bar{\mathbb{N}}}\subset C([0,T];B^{s}_{p,r})$, and $(a^n)_{n\in\bar{\mathbb{N}}} \subset C([0,T];B^{s+1}_{p,r})$. Assume that $v^n$ solves 
   	\begin{equation}
   		\left\{\begin{array}{l}
   			v^n_t+a^n\cdot\nabla v^n=f, \\
   			v^n(0,x)=v_0(x),
   		\end{array}\right.
   	\end{equation}
   	with $v_0\in B^s_{p,r},\ f\in L^1([0,T];B^s_{p,r})$, and for some $\alpha\in L^1([0,T])$ which satisfies
   	$$\sup\limits_{n\in\bar{\mathbb{N}}}\|a^n(t)\|_{B^{s+1}_{p,r}}\leq \alpha(t).$$
   	If $a^n \rightarrow a^{\infty}$ in $L^1([0,T];B^s_{p,r})$ as $n\rightarrow \infty$, then $v^n \rightarrow v^{\infty}$ in ~$C([0,T];B^s_{p,r})$ .
   \end{lemm}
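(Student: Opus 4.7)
The plan is to set $w^n := v^n - v^\infty$ and to establish $w^n \to 0$ in $C([0,T];B^s_{p,r})$ in two stages: first at the weaker regularity $s-1$, then upgrading to the target regularity $s$. Subtracting the transport equations satisfied by $v^n$ and $v^\infty$ gives
\begin{equation*}
\partial_t w^n + a^n \partial_x w^n = (a^\infty - a^n)\partial_x v^\infty, \qquad w^n(0,\cdot)=0,
\end{equation*}
so $w^n$ itself solves a linear transport equation with coefficient $a^n$, vanishing initial data, and a source we intend to make small. The hypothesis $\|a^n(t)\|_{B^{s+1}_{p,r}} \leq \alpha(t) \in L^1([0,T])$ keeps the exponential factor appearing in any transport a priori estimate (Lemma \ref{BCD Thm3.14} or Lemma \ref{Li}) uniform in $n$.

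In the first stage I would apply Lemma \ref{Li} to this difference equation at regularity $s-1$, together with the product estimate $\|(a^n-a^\infty)\partial_x v^\infty\|_{B^{s-1}_{p,r}} \lesssim \|a^n-a^\infty\|_{B^s_{p,r}}\|v^\infty\|_{B^s_{p,r}}$ from Lemma \ref{Moser}(3). Since $v^\infty \in C([0,T];B^s_{p,r})$ is bounded and $a^n \to a^\infty$ in $L^1([0,T];B^s_{p,r})$ by assumption, integrating in time yields
\begin{equation*}
\sup_{t\in[0,T]} \|w^n(t)\|_{B^{s-1}_{p,r}} \lesssim e^{C\|\alpha\|_{L^1([0,T])}} \|v^\infty\|_{L^\infty([0,T];B^s_{p,r})} \|a^n-a^\infty\|_{L^1([0,T];B^s_{p,r})} \longrightarrow 0,
\end{equation*}
so $v^n \to v^\infty$ in $C([0,T];B^{s-1}_{p,r})$.

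The main obstacle is the upgrade to $B^s_{p,r}$: repeating the above argument at regularity $s$ does not work directly, because any product estimate for $\|(a^n-a^\infty)\partial_x v^\infty\|_{B^s_{p,r}}$ requires $a^n-a^\infty$ to lie in a space strictly stronger than $B^{s+1}_{p,r}$, where we have only uniform boundedness rather than convergence. To circumvent this I would introduce a dyadic threshold $N$ and split
\begin{equation*}
\|w^n\|_{B^s_{p,r}} \leq \|S_N w^n\|_{B^s_{p,r}} + \|(\mathrm{Id}-S_N) v^n\|_{B^s_{p,r}} + \|(\mathrm{Id}-S_N) v^\infty\|_{B^s_{p,r}}.
\end{equation*}
The low-frequency piece is bounded by $C\,2^N \|w^n\|_{B^{s-1}_{p,r}}$ via Bernstein and vanishes as $n \to \infty$ for fixed $N$; the $v^\infty$-tail vanishes uniformly in $t$ as $N \to \infty$, since $r<\infty$ together with $v^\infty \in C([0,T];B^s_{p,r})$ (item (2) of Proposition \ref{basic}) makes the image $v^\infty([0,T])$ compact in $B^s_{p,r}$. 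The delicate point is the uniform-in-$n$ smallness of $\sup_t \|(\mathrm{Id}-S_N) v^n(t)\|_{B^s_{p,r}}$, which I would extract by applying $\Delta_j$ to the equation for $v^n$, controlling the commutators $[a^n,\Delta_j]\partial_x v^n$ via the standard estimates from \cite{BCD}, and summing over $j>N$ using the uniform bound $\alpha(t)$ together with the uniform bound $\|v^n\|_{L^\infty([0,T];B^s_{p,r})}\leq M$ supplied by Lemma \ref{BCD Thm3.14}. A standard $\varepsilon/2+\varepsilon/2$ diagonal argument—first choose $N$ large enough to force the high-frequency tails below $\varepsilon/2$ uniformly in $n$, then take $n$ large enough so that the low-frequency piece falls below $\varepsilon/2$—then delivers the conclusion.
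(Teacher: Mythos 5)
First, a remark on the comparison itself: the paper does not prove Lemma \ref{continue} — it is quoted verbatim from \cite{Li.J} — so your proposal can only be judged on its own merits. Your architecture (prove convergence at level $s-1$ from the difference equation, then upgrade to level $s$ by a low/high frequency splitting combined with a uniform tail estimate) is the standard and correct route, and your first stage is complete: the difference equation, the product estimate $\|(a^n-a^\infty)\partial_x v^\infty\|_{B^{s-1}_{p,r}}\lesssim\|a^n-a^\infty\|_{B^s_{p,r}}\|v^\infty\|_{B^s_{p,r}}$, and the Gronwall step are all sound (modulo the side remark that Lemma \ref{Li} and Lemma \ref{Moser}(3) require $s>\max(\frac12,\frac1p)$ rather than the bare $s>\frac1p$ of the statement; this covers every use made of the lemma in the paper).

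The genuine gap is in the step you yourself flag as delicate, and your proposed resolution of it does not work as stated. You claim that $\sup_{t}\|(\mathrm{Id}-S_N)v^n(t)\|_{B^s_{p,r}}$ can be made small uniformly in $n$ by ``controlling the commutators $[a^n,\Delta_j]\partial_x v^n$ via the standard estimates from \cite{BCD} and summing over $j>N$ using the uniform bound $\alpha(t)$ together with the uniform bound $\|v^n\|\leq M$.'' The standard commutator estimate only yields $2^{js}\|[a^n\partial_x,\Delta_j]v^n\|_{L^p}\leq Cc_j^n(t)\,\alpha(t)\,\|v^n\|_{B^s_{p,r}}$ with $(c_j^n(t))_j$ in the unit ball of $\ell^r$; the $\ell^r$ norm of $(c_j^n(t))_{j>N}$ need not tend to $0$ uniformly in $n$, because several pieces of the Bony decomposition of the commutator (the paraproduct $T_{\partial_x v^n}a^n$ and the remainders $R(a^n,v^n)$) are fed by the frequencies $\gtrsim 2^N$ of $a^n$, and for these you only have the uniform bound $\alpha(t)$, not uniform smallness of the tails (a sequence $a^n=a^\infty+2^{-n}\,e_{2^n}$ with $\|e_{2^n}\|_{B^{s+1}_{p,r}}\sim 2^{n}\|e_{2^n}\|_{B^{s}_{p,r}}\sim 1$ satisfies all hypotheses yet has $\sup_n\|(\mathrm{Id}-S_N)a^n\|_{B^{s+1}_{p,r}}\not\to0$). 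To close the argument you must reopen the commutator decomposition and check piece by piece that each contribution to the $j>N$ tail either involves only $\Delta_{j'}v^n$ with $j'\gtrsim N$ (and can be absorbed by Gronwall into the unknown tail), or carries a compensating factor $2^{-N}$ or $2^{N(\frac1p-s)}$ coming from the derivative gain in the commutator or from the strict inequality $s>\frac1p$. This is doable but is precisely the content of the step, and it is absent. A cleaner way to avoid the issue entirely is to mollify the data instead of the solution: introduce $v^{n,N}$ solving the same equation with coefficient $a^n$ but data $S_Nv_0$ and source $S_Nf$; then $\|v^n-v^{n,N}\|_{L^\infty_TB^s_{p,r}}\leq e^{C\|\alpha\|_{L^1}}(\|(\mathrm{Id}-S_N)v_0\|_{B^s_{p,r}}+\|(\mathrm{Id}-S_N)f\|_{L^1_TB^s_{p,r}})$ is small uniformly in $n$, while for fixed $N$ the $v^{n,N}$ are uniformly bounded in $B^{s+1}_{p,r}$, so your stage-one argument run at level $s$ (using the algebra property and $\|\partial_xv^{\infty,N}\|_{B^s_{p,r}}\lesssim\|v^{\infty,N}\|_{B^{s+1}_{p,r}}<\infty$) gives $v^{n,N}\to v^{\infty,N}$ in $C([0,T];B^s_{p,r})$, and the usual $\varepsilon/3$ argument concludes.
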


\section{Local well-posedness}
In this section, we obtain the local well-posedness of equation \eqref{eqn} with data in Besov spaces. Before starting our local existence result, we introduce the following function spaces:
\begin{flalign}
&E_{p,r}^{s}(T)\triangleq\mathcal{C}([0,T];B_{p,r}^{s})\cap\mathcal{C}^{1}([0,T];B_{p,r}^{s-1}),\quad\mathrm{if}~r<\infty, \nonumber \\
&E_{p,\infty}^{s}(T)\triangleq\mathcal{C}_{w}([0,T];B_{p,\infty}^{s})\cap C^{0,1}
([0,T];B_{p,\infty}^{s-1}),
\nonumber
\end{flalign}
with $T>0,~s\in\mathbb{R}$ and $1\leq p,r\leq\infty$.

Our main theorem can be stated as follows.

\begin{theo}\label{the3.1}
	Let $1\leq p,r\leq\infty,~s>\max(\frac{1}{2},\frac{1}{p})$ and $n_0\triangleq (1-\partial_x^2) u_0$  be in $B^{s}_{p,r} $. Then there exists a time $T>0$ such that \eqref{eqn} has a unique solution $n$ in $E^s_{p,r}(T)$. Moreover,  the solution $n$ depends continuously on the initial data $n_0$.

\end{theo}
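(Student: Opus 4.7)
The plan is to use the Friedrichs/iteration scheme of Bahouri--Chemin--Danchin, adapted to this cubic equation by invoking the sharp one-dimensional transport estimate of Lemma \ref{Li}. Set $n^0 \equiv 0$, $u^j := P_1(D) n^j$, and define $n^{j+1}$ successively as the unique solution (supplied by Lemma \ref{existence}) of the linear transport problem
\begin{equation*}
\partial_t n^{j+1} + \bigl((u^j)^2 + (u^j_x)^2\bigr)\,\partial_x n^{j+1} = G(u^j), \qquad n^{j+1}|_{t=0} = S_{j+1} n_0.
\end{equation*}

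For uniform bounds, I would apply Lemma \ref{Li} with the unknown at regularity $s$ (i.e.\ with the lemma's index shifted up by one). Since $u^j \in B^{s+2}_{p,r}$, the Moser/algebra estimates of Lemma \ref{Moser} together with the multiplier bounds in Lemma \ref{bdd oper} yield
\begin{equation*}
\|(u^j)^2 + (u^j_x)^2\|_{B^{s+2}_{p,r}} \lesssim \|n^j\|_{B^s_{p,r}}^2, \qquad \|G(u^j)\|_{B^s_{p,r}} \lesssim \|n^j\|_{B^s_{p,r}}^3,
\end{equation*}
using $s > 1/p$ so that $B^s_{p,r}$ is an algebra continuously embedded in $L^\infty$ (Proposition \ref{basic}(4)). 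Plugging these into the exponential estimate of Lemma \ref{Li} and solving a differential inequality of the type $U'(t) \leq C(1+U(t))^3$ produces a lifespan $T>0$ depending only on $\|n_0\|_{B^s_{p,r}}$ on which $(n^j)$ is uniformly bounded in $L^\infty([0,T]; B^s_{p,r})$.

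To pass to the limit, set $\delta^{j+1} := n^{j+1}-n^j$ and derive
\begin{equation*}
\partial_t \delta^{j+1} + \bigl((u^j)^2 + (u^j_x)^2\bigr)\,\partial_x \delta^{j+1} = R^j,
\end{equation*}
where $R^j$ gathers the difference of the advection coefficients times $\partial_x n^j$ and the difference $G(u^j)-G(u^{j-1})$. A direct application of Lemma \ref{Li} at the one-lower regularity $s-1$ (precisely where the hypothesis $s > \max(\tfrac12, \tfrac1p)$ is invoked), together with Moser-type estimates of Lemma \ref{Moser} applied to the differences, shows $(n^j)$ is Cauchy in $C([0,T]; B^{s-1}_{p,r})$. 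The limit $n$ lies in $L^\infty([0,T]; B^s_{p,r})$ by the Fatou property (Proposition \ref{basic}(5)), satisfies \eqref{eqn} distributionally, and has $\partial_t n \in L^\infty([0,T]; B^{s-1}_{p,r})$ by inspection of the equation. Interpolating with the uniform bound and invoking Lemma \ref{continue} upgrades time-continuity to $C([0,T]; B^s_{p,r})$ when $r<\infty$; the case $r=\infty$ is handled in the weak-$*$ sense as in the BCD framework. Uniqueness comes from the same $B^{s-1}_{p,r}$ difference estimate applied to two solutions, and continuous dependence on $n_0$ follows by another application of Lemma \ref{continue}.

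The main technical obstacle is that the advection coefficient $u^2+u_x^2$ is only one derivative more regular than $n$, while the cubic forcing $G(u)$ contains products involving $u_{xx}$; naive product estimates would lose regularity at the critical index. Lemma \ref{Li} is tailored precisely to this one-derivative gap in one space dimension, and the two smoothings hidden in $P_1(D)=(1-\partial_x^2)^{-1}$ inside $G$ recover the orders needed for $G(u)$ to remain in $B^s_{p,r}$. Exploiting both ingredients simultaneously is what permits closing the a priori estimate at the sharp threshold $s > \max(\tfrac12, \tfrac1p)$.
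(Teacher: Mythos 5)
Your overall scheme---iterating the linear transport problem, proving uniform bounds, showing the iterates are Cauchy in $C([0,T];B^{s-1}_{p,r})$, recovering the $B^{s}_{p,r}$ regularity by the Fatou property, and proving uniqueness through a difference estimate one derivative down---is the same as the paper's. However, two steps as you describe them do not go through. First, in the uniform-bound step the claimed estimate $\|(u^j)^2+(u^j_x)^2\|_{B^{s+2}_{p,r}}\lesssim\|n^j\|^2_{B^s_{p,r}}$ is false: $u^j_x$ only lies in $B^{s+1}_{p,r}$, so its square lies in $B^{s+1}_{p,r}$ and no better, whence $\partial_x\bigl((u^j)^2+(u^j_x)^2\bigr)$ belongs to $B^{s}_{p,r}$ only. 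Consequently the hypothesis of Lemma \ref{Li} at the shifted index (which would require $\partial_x v\in L^1([0,T];B^{s+1}_{p,r})$ to control the unknown in $B^{s}_{p,r}$) is not satisfied, and your route to the $B^{s}_{p,r}$ bound for $n^{j+1}$ collapses. The paper instead closes this step with the standard a priori estimate of Lemma \ref{BCD Thm3.14}, whose hypothesis ($\nabla v\in B^{1/p}_{p,\infty}\cap L^\infty$ when $s<1+\frac{1}{p}$, or $\nabla v\in B^{s-1}_{p,r}$ otherwise) is met by $\nabla v\in B^{s}_{p,r}$; Lemma \ref{Li} is reserved for the difference estimates at level $s-1$, which is exactly where your use of it is correct.

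Second, continuous dependence does not follow from ``another application of Lemma \ref{continue}'': that lemma assumes all members of the sequence solve transport equations with the \emph{same} forcing $f$, whereas the solutions $n^k$ emanating from data $n_0^k$ carry the solution-dependent forcings $G(u^k)$. The missing idea is the splitting $n^k=y^k+z^k$, where $y^k$ solves the transport equation with the fixed forcing $G(u^\infty)$ (so Lemma \ref{continue} applies and gives $y^k\to y^\infty$ in $C([0,T];B^{s}_{p,r})$) and $z^k$ solves it with forcing $G(u^k)-G(u^\infty)$ and data $n_0^k-n_0^\infty$, both of which are small and are handled by a Gronwall argument. Without some such device (interpolating the $B^{s-1}_{p,r}$ convergence coming from the uniqueness estimate against the uniform $B^{s}_{p,r}$ bound only yields convergence in $B^{s'}_{p,r}$ for $s'<s$), the continuity of the data-to-solution map in the top norm $B^{s}_{p,r}$ is not established.
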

\begin{proof}\
	Now  we prove Theorem \ref{the3.1} in five steps.\\

	\textbf{Step 1: Constructing Approximate Solutions}
	
	Starting from $u^0\triangleq 0$, we define by induction a sequence $(n^k)_{k\in\mathbb{N}}$
	of smooth functions by solving the following linear transport equation:
	\begin{equation}\label{T_k}
		(T_k)\left\{\begin{array}{l}
			n_t^{k+1}+ ((u^k)^2+(u_x^k)^2)n^{k+1}_x=G(u^k), \\
			u^{k+1}=(1-\partial^2_x)^{-1}n^{k+1},  \\
			u^{k+1}|_{t=0}=S_{k+1}u_0.
		\end{array}\right.
	\end{equation}
By induction, we assume that $n^k\in E^s_{p,r}(T)$ for all positive $T$. For the right side of \eqref{T_k}, the following estimates can be derived by using Lemma \ref{bdd oper},  $B^{s}_{p,r}\hookrightarrow L^\infty$ and Lemma \ref{Moser}.
\begin{align}\label{G_est1}
	\|G(u^k)\|_{B^{s}_{p,r}}&=\|P_1(D)G_1(u^k)+\partial_xP_1(D)G_2(u^k)+\partial_x^2P_1(D)G_3(u^k)\|_{B^{s}_{p,r}}\notag\\
&\leq
C\Big(\|2u_x^k(u_{xx}^k)^2\|_{B^{s}_{p,r}}+\|-\frac{5}{3}(u^k)^3-2u^k(u_x^k)^2-3(u^k)^2u^k_{xx}+16(u_x^k)^2u^k_{xx}-u^k(u_{xx}^k)^2\|_{B^{s}_{p,r}}\notag\\
&~~~+\|-u^k_x(u^k_{xx})^2-2u^ku^k_xu^k_{xx}\|_{B^{s}_{p,r}}\Big) \notag \\ 
&\leq C\|n^k\|^3_{B^{s}_{p,r}}.
\end{align}
To apply Lemma \ref{existence}, we first verify that its hypotheses hold. 
$$\|n_0^{k+1}\|_{B^{s}_{p,r}}=\|S_{k+1}n_0\|_{B^{s}_{p,r}}\leq C\|n_0\|_{B^{s}_{p,r}}.$$
We then deduce that $n_0^{k+1}\in B^{s}_{p,r}.$
From the previous computation, we have $G(u^k)\in L^1([0,T];B^{s}_{p,r})$. At the same time, applying Bernstein's inequalities in \cite{BCD} and Lemma  \ref{Moser} followed by a simple calculation shows that $(u^k)^2+(u_x^k)^2\in L^\rho([0,T];B^{-M}_{\infty,\infty})$ for some $\rho>1$ and $M>0$. 
By virtue of Lemma \ref{Moser} and the fact that $B^{s}_{p,r}$ is an algebra, we then have
$$ \int_0^t \|\nabla((u^k)^2+(u_x^k)^2)\|_{B^{s}_{p,r}} d\tau =\int_0^t \|2u^ku_x^k+2u_x^ku_{xx}^k\|_{B^{s}_{p,r}} d\tau \leq C\int^t_0\|n^k\|^2_{B^{s}_{p,r}}d\tau.$$
Since $B^{s}_{p,r} \hookrightarrow B^{1/p}_{p,\infty}\cap L^\infty,~ B^{s}_{p,r}\hookrightarrow B^{s-1}_{p,r}$ and the previous process, we can use Lemma \ref{existence}. Making use of Lemma \ref{existence} ensures that \eqref{T_k} has a unique solution $n^{k+1}$ in\\
-the space $C([0,T];B^s_{p,r})$,~if $r<\infty$; \\
-the space $\Big(\bigcap_{s'<s}C([0,T];B^{s'}_{p,\infty})\Big)\bigcap C_w([0,T];B^s_{p,\infty})$,~if $r=\infty$.\par
In the case where $r$ is finite, we still have to prove that $n_t^{k+1} \in C([0,T];B^{s-1}_{p,r}).$ We easily deduce that $G(u^k)\in C([0,T];B^{s-1}_{p,r})$ and $((u^k)^2+(u_x^k)^2)n^{k+1}_x \in  C([0,T];B^{s-1}_{p,r})$ by Lemma \ref{Moser}. From the equation \eqref{T_k}, we have $n_t^{k+1} \in C([0,T];B^{s-1}_{p,r}).$ When $r=\infty$, it suffices to prove $n_t^{k+1} \in L^\infty([0,T];B^{s-1}_{p,\infty}).$ The proof for the case $r=\infty$ is similar to the case $r<\infty$, so we omit the details here. We then can conclude that $n^{k+1}\in E^s_{p,r}(T)$ for all positive $T$.\\

\textbf{Step 2: Uniform Bounds}\par
We define $N^k(t) \triangleq \int^t_0\|n^k(\tau)\|^2_{B^{s}_{p,r}}d\tau$. According to \textbf{Step 1} and Lemma \ref{BCD Thm3.14}, for all $k\in\mathbb{N}$, we have the following inequality,
\begin{align}
\|n^{k+1}(t)\|_{B^{s}_{p,r}} &\leq e^{CN^k(t)}
\Big(\|n^{k+1}_0\|_{B^{s}_{p,r}}+\int_0^t e^{-CN^k(\tau)} \|G^k(\tau)\|_{B^{s}_{p,r}}d\tau\Big)  \notag\\
&\leq Ce^{CN^k(t)}\Big(\|n_0\|_{B^{s}_{p,r}}+\int_0^t e^{-CN^k(\tau)} \|n^{k}(\tau)\|_{B^{s}_{p,r}}^3d\tau\Big).       \label{1}
\end{align}
Without loss of generality, we may assume $C > 1$. We fix a $T>0$ such that $4C^3T\|n_0\|_{B^{s}_{p,r}}^2 < 1$ and suppose that
\begin{equation}\label{2}
\|n^k(t)\|_{B_{p,r}^s}\leq\frac{C\|n_0\|_{B_{p,r}^s}}{\sqrt{1-4C^3t\|n_0\|_{B_{p,r}^s}^2}},\quad \forall t\in[0,T]. 
\end{equation}
We thus get
\begin{equation}\label{3}
e^{CN^k(t)} \leq e^{C^3\int_0^t \|n_0\|_{B_{p,r}^s}^2 (1-4C^3\tau\|n_0\|_{B_{p,r}^s}^2)^{-1} d\tau} =(1-4C^3t\|n_0\|_{B_{p,r}^s}^2)^{-\frac{1}{4}},
\end{equation}
and
\begin{equation}\label{4}
e^{CN^k(t)-CN^k(\tau)} \leq (1-4C^3t\|n_0\|_{B_{p,r}^s}^2)^{-\frac{1}{4}}(1-4C^3\tau\|n_0\|_{B_{p,r}^s}^2)^{\frac{1}{4}}.
\end{equation}
Plugging \eqref{2}, \eqref{3} and \eqref{4} into \eqref{1} yields
\begin{align*}
\|n^{k+1}(t)\|_{B^{s}_{p,r}} 
&\leq C(1-4C^3t\|n_0\|_{B_{p,r}^s}^2)^{-\frac{1}{4}} \Big(\|n_0\|_{B^{s}_{p,r}}+\int_0^t \frac{C^3\|n_0\|_{B_{p,r}^s}^3}{(1-4C^3\tau\|n_0\|_{B_{p,r}^s}^2)^{\frac{5}{4}}} d\tau\Big)  \notag\\
&\leq C(1-4C^3t\|n_0\|_{B_{p,r}^s}^2)^{-\frac{1}{2}} \|n_0\|_{B_{p,r}^s}.     
\end{align*}                                                                                                                                                           
By induction, the sequence $(n^k)_{k\in\mathbb{N}}$ is bounded in $L^\infty([0,T];B^{s}_{p,r})$. This clearly entails that $(G(u^k))_{k\in\mathbb{N}}$ and $(((u^k)^2+(u_x^k)^2)n^{k}_x)_{k\in\mathbb{N}}$ are bounded in $L^\infty([0,T];B^{s-1}_{p,r})$ by the first step. According to the equation \eqref{T_k}, we can conclude that $(n^k)_{k\in\mathbb{N}}$ is bounded in $E^s_{p,r}(T)$.\\

\textbf{Step 3: Convergence and Regularity}\par

We claim that $(n^k)_{k\in\mathbb{N}}$ is a Cauchy sequence in $C([0,T];B^{s-1}_{p,r})$. Indeed, for all $k, p\in\mathbb{N}$, denote $w^{k,p}=n^{k+p}-n^k$. By the system $(T_{k+p})$ and $(T_k)$, $w^{k+1,p}$ satisfies	
\begin{align}\label{W_{k,p}}
\left\{\begin{array}{l}
w_t^{k+1,p}+ ((u^{k+p})^2+(u_x^{k+p})^2)w^{k+1,p}_x=G(u^{k+p})-G(u^{k})-(u^{k}+u^{k+p})(u^{k+p}-u^{k})n^{k+1}_x\\
~~~~~~~~~~~~~~~~~~~~~~~~~~~~~~~~~~~~~~~~~~~~~~\quad~ -(u^{k}_x+u^{k+p}_x)(u^{k+p}_x-u^{k}_x)n^{k+1}_x,\\
w^{k+1,p}|_{t=0}=S_{k+p+1}n_0-S_{k+1}n_0,
\end{array}\right.
\end{align}	
where 
\begin{align*}
G(u^{k+p})-G(u^{k})&=P_1(D)(G_1(u^{k+p})-G_1(u^k))+\partial_xP_1(D)(G_2(u^{k+p})-G_2(u^k))\\
&~~~+\partial_x^2P_1(D)(G_3(u^{k+p})-G_3(u^k)).
\end{align*}	
We now estimate the right-hand side of \eqref{W_{k,p}}. Denote 
\begin{align*}
\Rmnum{1}&\triangleq\|G(u^{k+p})-G(u^{k})-(u^{k}+u^{k+p})(u^{k+p}-u^{k})n^{k+1}_x-(u^{k}_x+u^{k+p}_x)(u^{k+p}_x-u^{k}_x)n^{k+1}_x \|_{B^{s-1}_{p,r}} \\
&\leq \Rmnum{1}_1+ \Rmnum{1}_2,
\end{align*} 
where
\begin{align*}
\begin{array}{l}
\Rmnum{1}_1\triangleq \|G(u^{k+p})-G(u^{k})\|_{B^{s-1}_{p,r}},\\
\Rmnum{1}_2\triangleq \|-(u^{k}+u^{k+p})(u^{k+p}-u^{k})n^{k+1}_x-(u^{k}_x+u^{k+p}_x)(u^{k+p}_x-u^{k}_x)n^{k+1}_x \|_{B^{s-1}_{p,r}}.
\end{array}
\end{align*}

\textit{Bounds for $\Rmnum{1}_1$.} By virtue of Lemma \ref{bdd oper} and Lemma \ref{Moser}, using the fact that $(n^k)_{k\in\mathbb{N}}$ has uniform bounds in $
L^\infty([0,T];B^{s}_{p,r})$, we get 
\begin{align}\label{I_1}
\Rmnum{1}_1&\leq
C\Big(\|G_1(u^{k+p})-G_1(u^k)\|_{B^{s-1}_{p,r}}+\|G_2(u^{k+p})-G_2(u^k)\|_{B^{s-1}_{p,r}}+\|G_3(u^{k+p})-G_3(u^k)\|_{B^{s-1}_{p,r}}\Big) \notag\\
&\leq
C\Big(\|2u_x^{k+p}(u_{xx}^{k+p})^2-2u_x^k(u_{xx}^k)^2\|_{B^{s-1}_{p,r}}+\|(-\frac{5}{3}(u^{k+p})^3-2u^{k+p}(u_x^{k+p})^2-3(u^{k+p})^2u^{k+p}_{xx}+16(u_x^{k+p})^2\notag\\
&~~~~~~~\times u^{k+p}_{xx}-u^{k+p}(u_{xx}^{k+p})^2)-(-\frac{5}{3}(u^k)^3-2u^k(u_x^k)^2-3(u^k)^2u^k_{xx}+16(u_x^k)^2u^k_{xx}-u^k(u_{xx}^k)^2)\|_{B^{s-1}_{p,r}}\notag\\
&~~~~~~~+\|u^{k+p}_x(u^{k+p}_{xx})^2+2u^{k+p}u^{k+p}_xu^{k+p}_{xx}-u^k_x(u^k_{xx})^2-2u^ku^k_xu^k_{xx}\|_{B^{s-1}_{p,r}}\Big) \notag \\ 
&\leq C\|n^{k+p}-n^k\|_{B^{s-1}_{p,r}}.
\end{align}

\textit{Bounds for $\Rmnum{1}_2$.}
Similar arguments lead to
\begin{align}\label{I_2}
\Rmnum{1}_2 &\leq \|u^{k}+u^{k+p}\|_{B^{s}_{p,r}}\|u^{k+p}-u^{k}\|_{B^{s}_{p,r}}\|n_x^{k+1}\|_{B^{s-1}_{p,r}}+\|u^{k}_x+u^{k+p}_x\|_{B^{s}_{p,r}}\|u^{k+p}_x-u^{k}_x\|_{B^{s}_{p,r}}\|n_x^{k+1} \|_{B^{s-1}_{p,r}}\notag \\
&\leq C\|n^{k+p}-n^k\|_{B^{s-1}_{p,r}}.
\end{align}
From \eqref{I_1} and \eqref{I_2}, the right-hand side of the system \eqref{W_{k,p}} admits the following estimate:
$$\Rmnum{1}\leq C\|n^{k+p}-n^k\|_{B^{s-1}_{p,r}}.$$	
Next, if $r<\infty$, we derive the estimate for the initial data in \eqref{W_{k,p}}:
\begin{align}
\|S_{k+p+1}n_0-S_{k+1}n_0\|_{B^{s-1}_{p,r}}&=\|\sum_{j'=k+1}^{k+p}\Delta_{j'}n_0\|_{B^{s-1}_{p,r}}\notag\\
&\leq\Big( \sum_{l\geq -1}2^{l(s-1)r} \|\Delta_l \sum_{j'=k+1}^{k+p}\Delta_{j'}n_0\|_{L^{p}}^r \Big)^{1/r} \notag\\
&\leq C\Big( \sum_{l=k}^{k+p+1}2^{-lr}2^{lsr}\|\Delta_{l}n_0\|_{L^{p}}^r \Big)^{1/r}\notag\\
&\leq C \|n_0\|_{B^{s}_{p,\infty}}\Big( \sum_{l=k}^{k+p+1}2^{-lr}\Big)^{1/r} \leq C 2^{-k}.
\end{align}
The corresponding results for $r = \infty$ require only trivial modifications to the argument.	By virtue of Lemma \ref{Li} and using the fact that  $(m^k)_{k\in\mathbb{N}}$ is bounded in $
L^\infty([0,T];B^{s}_{p,r})$,
we infer that
\begin{align}
\|w^{k+1,p}\|_{B^{s-1}_{p,r}} &\leq C_T\Big(\|S_{k+p+1}n_0-S_{k+1}n_0\|_{B^{s-1}_{p,r}}+\int_{0}^{t}\|n^{k+p}-n^k\|_{B^{s-1}_{p,r}} dt' \Big) \notag\\
&\leq C_T\Big(2^{-k}+\int_{0}^{t}\|w^{k,p}\|_{B^{s-1}_{p,r}} dt' \Big).
\end{align}
Arguing by induction, for any $t\in [0,T],$ we get, 	
\begin{align}
\|w^{k+1,p}(t)\|_{B^{s-1}_{p,r}} &\leq C_T\Big(2^{-k}+\int_{0}^{t}C_T(2^{-(k-1)}+\int_{0}^{t_1}\|w^{k-1,p}\|_{B^{s-1}_{p,r}} dt_2) dt_1 \Big) \notag\\
&\leq C_T2^{-k}+C_T^22^{-k+1}t+\cdots+C_T^{k+1}\frac{1}{k!}2^{-k+k}t^k\notag\\
 &~~~+C_T^{k+1}\int_{0}^{t}\cdots\int_{0}^{t_k}\|w^{0,p}\|_{B^{s-1}_{p,r}}dt_{k+1}\cdots dt_1 \notag\\
&\leq C_T\sum_{l=0}^{k}\frac{(2C_T T)^l}{l!}2^{-k}+C_T^{k+1}\int_{0}^{t}\cdots\int_{0}^{t_k}\|n^p\|_{B^{s-1}_{p,r}}dt_{k+1}\cdots dt_1 \notag\\
&\leq C\Big( \sum_{l=0}^{k}\frac{(2C_T T)^l}{l!}2^{-k}+ \frac{1}{(k+1)!}(C_T T)^{k+1}\Big) \notag
\end{align}
which leads to $\|w^{k+1,p}\|_{B^{s-1}_{p,r}} \rightarrow 0 $ as $k\rightarrow \infty$ for all $p\in \mathbb{N}$. Hence, $(n^k)_{k\in\mathbb{N}}$ is a Cauchy sequence in $C([0,T];B^{s-1}_{p,r})$ and converges to some limit function $n\in C([0,T];B^{s-1}_{p,r})$.
Since $(n^k)_{k\in\mathbb{N}}$ is bounded in $
L^\infty([0,T];B^{s}_{p,r})$, the Fatou property for Besov spaces guarantees that $n\in L^\infty([0,T];B^{s}_{p,r})$. Passing to the limit in \eqref{T_k}, we easily deduce that $n$ is a solution of \eqref{eqn}.  Thanks to Lemma \ref{existence} and the equation \eqref{eqn}, we conclude that $n \in E^s_{p,r}(T)$.	\\

\textbf{Step 4: Uniqueness.}\par 
	Let $n_1$ and $n_2$ be two solutions in $E^s_{p,r}(T)$ of \eqref{eqn} with initial data $n_{10}, n_{20} \in B^{s}_{p,r}$. In order to show that these two solutions coincide, we shall give estimates for $w\triangleq n_1-n_2$. Straightforward calculations shows that $w$ satisfies the following system: 
	\begin{equation}\label{W12}
	\left\{\begin{array}{l}
	w_t+ (u_1^2+u_{1x}^2)w_x =-(u_1^2+u_{1x}^2)n_{2x}+(u_2^2+u_{2x}^2)n_{2x}+G(u_1)-G(u_2), \\
	u_1=(1-\partial^2_x)^{-1}n_1,\quad u_2=(1-\partial^2_x)^{-1}n_2,  \\
	w|_{t=0}=n_{10}-n_{20}.
	\end{array}\right.
	\end{equation}
	By repeating the computational procedure from \textbf{Step 3}, we derive
	$$\|w(t)\|_{B^{s-1}_{p,r}} \leq C_T\Big(\|n_{10}-n_{20}\|_{B^{s-1}_{p,r}}+\int_{0}^{t}\|w(t')\|_{B^{s-1}_{p,r}} dt' \Big). $$
	Thanks to Gronwall’s inequality, we conclude that
	\begin{equation}\label{unique}
	\sup_{t\in[0,T]}\|w(t)\|_{B^{s-1}_{p,r}}\leq C\|n_{10}-n_{20}\|_{B^{s-1}_{p,r}},
	\end{equation}
	which completes the proof of uniqueness.\\
	
\textbf{Step 5: Continuous Dependence.}	\par
	Denote $\bar{\mathbb{N}}=\mathbb{N}\cup \{\infty\}$. First, we consider the case where $r<\infty $. For $k\in \bar{\mathbb{N}}$, let us suppose $n^k \in C([0,T];B^{s}_{p,r}) $ is the solution of \eqref{eqn} with initial data $n^k_{0} \in B^{s}_{p,r} $ where $k\in\bar{\mathbb{N}}.$ Now we claim that if $n_0^k\rightarrow n_0^\infty$ in $B^{s}_{p,r}$ as $k\rightarrow \infty$, then $n^k\rightarrow n^\infty$ in $C([0,T];B^{s}_{p,r})$ with the lifespan $T$ satisfying $4C^3T\sup_{k\in\bar{\mathbb{N}}}\|n_0^k\|_{B^{s}_{p,r}}^2 < 1$.\par
	Indeed, split $n^k$ into $y^k+z^k$ with $(y^k,z^k)$ satisfying 
	\begin{equation}\label{Ty}
	\left\{\begin{array}{l}
	y^k_{t}+((u^k)^{2}+(u^k_{x})^{2})y^k_{x}=G(u^\infty),\\
	y^k|_{t=0}=n^\infty_{0},
	\end{array}\right.\quad\mathrm{and}\quad
	\left\{\begin{array}{l}
	
	z^k_{t}+((u^k)^{2}+(u^k_{x})^{2})z^k_{x}=G(u^k)-G(u^\infty),\\
	z^k|_{t=0}=n^k_0-n^\infty_{0}.
	\end{array}\right.
	\end{equation}
    Let
    $$H=\frac{C\sup_{k\in\bar{\mathbb{N}}}\|n_0^k\|_{B^{s}_{p,r}}}{\sqrt{1-4C^3T(\sup_{k\in\bar{\mathbb{N}}}\|n_0^k\|_{B^{s}_{p,r}})^2 }}.$$
    According to the proof of the existence, we have 
    $$\|n^k(t)\|_{B^{s}_{p,r}}\leq H,\quad \forall t\in[0,T],~ \forall k\in\bar{\mathbb{N}}. $$
	Hence, $(n^k)_{k\in\bar{\mathbb{N}}}$ is uniformly bounded in $C([0,T];B^{s}_{p,r})$ and
	$$\sup_{k\in\bar{\mathbb{N}}}\|(u^k)^{2}+(u^k_{x})^{2}\|_{B^{s+1}_{p,r}}\leq C \sup_{k\in\bar{\mathbb{N}}}\|n^k\|^2_{B^{s}_{p,r}}\leq CH^2.$$
	Moreover, applying \eqref{unique} together with $n_0^k\rightarrow n_0^\infty$ in $B^{s}_{p,r}$ as $k\rightarrow \infty$ implies that 
	\begin{equation}
	n^k\rightarrow n^\infty ~\text{in}~  C([0,T];B^{s-1}_{p,r})~ \text{as}~ k\rightarrow \infty.  
	\end{equation}
	Thus, owing to the uniform bounds of $(n^k)_{k\in\bar{\mathbb{N}}}$,  we have 
	\begin{align*}
	\|(u^k)^{2}+(u^k_{x})^{2}-(u^\infty)^2-(u_x^\infty)^2\|_{B^{s}_{p,r}}&\leq \|u^k-u^\infty\|_{B^{s}_{p,r}}\|u^k+u^\infty\|_{B^{s}_{p,r}}+\|u^k_x-u_x^\infty\|_{B^{s}_{p,r}}\|u^k_x+u_x^\infty\|_{B^{s}_{p,r}}\\
	&\leq CH\|n^k-n^\infty\|_{B^{s-1}_{p,r}},
	\end{align*}
	which implies that $(u^k)^{2}+(u^k_{x})^{2}\rightarrow (u^\infty)^2+(u_x^\infty)^2$ in $L^1([0,T];B^{s}_{p,r})$.
	Therefore, applying Lemma \ref{continue}, we have $y^k\rightarrow y^\infty$ in $C([0,T];B^{s}_{p,r})$ as $k\rightarrow\infty$.  
	To control $z^k$, we need to estimate $G(u^k)-G(u^\infty)$. Similar to the estimates for $\Rmnum{1}_1$ in \textbf{Step 3} and the application of Lemma \ref{Moser}, we derive 
	$$\|G(u^{k})-G(u^{\infty})\|_{B^{s}_{p,r}}\leq CH^2\|n^{k}-n^{\infty}\|_{B^{s}_{p,r}}. $$
	Applying Lemma \ref{existence} and Lemma \ref{BCD Thm3.14}, we obtain 
	\begin{equation}
	\|z^k(t)\|_{B^{s}_{p,r}}\leq Ce^{CH^2 t}\Big(\|n_0^k-n_0^\infty\|_{B^{s}_{p,r}}+\int_{0}^{t}\|(n^{k}-n^\infty)(t')\|_{B^{s}_{p,r}} dt' \Big).	
	\end{equation}
	Since $z^\infty=0$ and $y^k\rightarrow y^\infty$ in $C([0,T];B^{s}_{p,r})$ as $k\rightarrow\infty$, for any $\epsilon>0$, there exists $\bar{N_1}\in\mathbb{N}$ such that
	$$\|(y^k-n^\infty)(t)\|_{B^{s}_{p,r}}\leq \epsilon,\quad \forall t\in[0,T]$$
	holds for all $k>\bar{N_1}$.
	Therefore, we have
	\begin{align*}
	\|(n^k-n^\infty)(t)\|_{B^{s}_{p,r}}&\leq \|z^k(t)\|_{B^{s}_{p,r}}+\|(y^k-n^\infty)(t)\|_{B^{s}_{p,r}}\\
	&\leq Ce^{CH^2 t}\Big(\|n_0^k-n_0^\infty\|_{B^{s}_{p,r}}+\int_{0}^{t}\|(n^{k}-n^\infty)(t')\|_{B^{s}_{p,r}} dt'\Big)+\epsilon.
	\end{align*}
	The Gronwall inequality implies that
	\begin{equation}
	\|(n^k-n^\infty)(t)\|_{B^{s}_{p,r}}\leq C (\epsilon+\|n_0^k-n_0^\infty\|_{B^{s}_{p,r}} ),\quad  \forall t\in[0,T].
	\end{equation}
	Hence, we gain the continuity of \eqref{eqn} in $C([0,T];B^{s}_{p,r})$ with respect to the initial data in $B^{s}_{p,r}$ for $r<\infty$.\par
	In the case $r=\infty$, we note that according to \eqref{unique} if $n_0^k\rightarrow n_0^\infty$ in $B^{s}_{p,r}$ as $k\rightarrow \infty$, we obtain $n^k\rightarrow n^\infty $ in $L^\infty([0,T];B^{s-1}_{p,r})$. Then, for fixed $\varphi\in B^{-s}_{p',1}$ we write
	
	\begin{align}\label{r=infty}
	\langle n^k-n^\infty,\varphi\rangle &=\langle S_j(n^k-n^\infty),\varphi\rangle +\langle (Id-S_j)(n^k-n^\infty),\varphi\rangle \notag\\
	&=\langle n^k-n^\infty,S_j\varphi\rangle +\langle n^k-n^\infty,(Id-S_j)\varphi\rangle.
	\end{align} 
	By duality (see Proposition \ref{duality}), we have 
	\begin{equation}\label{1r=infty}
	|\langle n^k-n^\infty,(Id-S_j)\varphi\rangle|\leq C\|n^k-n^\infty\|_{B^{s}_{p,\infty}}\|\varphi-S_j\varphi\|_{B^{-s}_{p',1}}\leq C\|\varphi-S_j\varphi\|_{B^{-s}_{p',1}}.
	\end{equation}
	For all $\epsilon>0$, we now choose a sufficiently large $j$ such that
	$$\|\varphi-S_j\varphi\|_{B^{-s}_{p',1}}<\frac{\epsilon}{2C}. $$
	Next, for the first term on the right-hand side of equation \eqref{r=infty}, we have 
	\begin{align}\label{2r=infty}
	|\langle n^k-n^\infty,S_j\varphi\rangle| &\leq C\|n^k-n^\infty\|_{B^{s-1}_{p,\infty}}\|S_j\varphi\|_{B^{1-s}_{p',1}} \notag\\
	&\leq C\|n^k-n^\infty\|_{B^{s-1}_{p,\infty}} \sum_{l\geq -1} 2^{l(1-s)}\|\Delta_lS_j\varphi\|_{L^{p'}}\notag\\
	&\leq C\|n^k-n^\infty\|_{B^{s-1}_{p,\infty}} \sum_{l\geq -1,l\leq j} 2^{l(1-s)}\|\Delta_l\varphi\|_{L^{p'}} \notag\\
	&\leq C2^j\|n^k-n^\infty\|_{B^{s-1}_{p,\infty}} \|\varphi\|_{B^{-s}_{p',1}}.
	\end{align}
	Since $n^k\rightarrow n^\infty $ in $L^\infty([0,T];B^{s-1}_{p,\infty})$, there exists $\bar{N}=\bar{N}(\epsilon)>0$ such that 
	$$\|n^k-n^\infty\|_{B^{s-1}_{p,\infty}}<\frac{\epsilon}{C2^{j+1}\|\varphi\|_{B^{-s}_{p',1}}} $$
	holds for all $k>\bar{N}$.  
	Plugging \eqref{1r=infty} and \eqref{2r=infty} into\eqref{r=infty}, we thus get
	$$\langle n^k-n^\infty,\varphi\rangle\rightarrow 0 ~\text{as}~ k\rightarrow \infty.$$
	This completes the proof of continuous dependence in the case $r=\infty$.\par
	Combining  \textbf{Step 1} to \textbf{Step 5}, we complete  the proof of Theorem \ref{the3.1}.
\end{proof}

\section{Blow up}
\par
This section is devoted to presenting the blow-up statements for equation \eqref{eqn}. First, we present a lemma concerning commutator estimates, which will be used later in establishing the blow-up criterion. 

\begin{lemm}\label{lem4.1}
	Let $s>0$ and $1\leq p,r\leq \infty$. Denote $R_j=[v\cdot\partial_{x},\Delta_j]f.$ There exists a contant $C=C(p,r,s)$ such that 
	\begin{equation}
	\|(2^{js}\|R_j\|_{L^p(\mathbb{R})})_{j\geq -1}\|_{\ell^r}\leq C\Big(\|\partial_x v\|_{L^\infty(\mathbb{R})}\|f\|_{B^s_{p,r}(\mathbb{R})}+\|\partial_x v\|_{B^s_{p,r}(\mathbb{R})}\|f\|_{L^\infty(\mathbb{R})}\Big).
	\end{equation}
\end{lemm}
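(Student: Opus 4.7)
My plan is to reduce the problem to standard paraproduct and remainder estimates by means of Bony's decomposition. Writing
\begin{equation*}
v\,\partial_x f = T_v\partial_x f + T_{\partial_x f}v + R(v,\partial_x f),\qquad v\,\partial_x \Delta_j f = T_v\partial_x \Delta_j f + T_{\partial_x \Delta_j f}v + R(v,\partial_x \Delta_j f),
\end{equation*}
and subtracting, I split
\begin{equation*}
R_j = \underbrace{[T_v,\Delta_j]\partial_x f}_{I_j} \;+\; \underbrace{T_{\partial_x \Delta_j f}v - \Delta_j T_{\partial_x f}v}_{II_j} \;+\; \underbrace{R(v,\partial_x \Delta_j f) - \Delta_j R(v,\partial_x f)}_{III_j}.
\end{equation*}
Each piece will be estimated separately, and summed in $\ell^r(2^{js}\cdot)$ at the end.

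The main term is $I_j$. By the support properties of paraproducts, only indices $j'$ with $|j-j'|\leq 4$ contribute, and for each such $j'$
\begin{equation*}
[S_{j'-1}v,\Delta_j]\,\partial_x\Delta_{j'}f(x) = 2^{j}\!\int h\!\bigl(2^{j}(x-y)\bigr)\bigl(S_{j'-1}v(y)-S_{j'-1}v(x)\bigr)\partial_x\Delta_{j'}f(y)\,dy,
\end{equation*}
where $h=\mathcal{F}^{-1}\varphi$. A first-order Taylor bound $|S_{j'-1}v(y)-S_{j'-1}v(x)|\leq |y-x|\,\|\partial_x v\|_{L^\infty}$, Young's inequality, and Bernstein's inequality (to absorb the $\partial_x$ against the $2^{-j}$ gained from the factor $|y-x|$) give
\begin{equation*}
\|I_j\|_{L^p}\leq C\,\|\partial_x v\|_{L^\infty}\sum_{|j'-j|\leq 4}\|\Delta_{j'}f\|_{L^p}.
\end{equation*}
Multiplying by $2^{js}$ and taking the $\ell^r$ norm produces exactly $\|\partial_x v\|_{L^\infty}\|f\|_{B^s_{p,r}}$.

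For the pieces $II_j$ and $III_j$ the key is to transfer the derivative from $f$ onto $v$ via the algebraic identities
\begin{equation*}
T_{\partial_x f}v = \partial_x T_f v - T_f \partial_x v,\qquad R(v,\partial_x f) = \partial_x R(v,f) - R(\partial_x v, f),
\end{equation*}
and the analogues with $f$ replaced by $\Delta_j f$. After this rearrangement each of $II_j$ and $III_j$ is a sum of paraproducts or remainders built from $f$ (no derivative, bounded in $L^\infty$) and $\partial_x v$ (bounded in $B^s_{p,r}$), up to differences of the form $\Delta_j(T_f\partial_x v)-T_{\Delta_j f}\partial_x v$ and $\Delta_j R(\partial_x v,f)-R(\partial_x v,\Delta_j f)$. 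These are controlled by the standard continuity results for paraproducts and remainders (valid since $s>0$), together with the uniform $L^p$-boundedness of $\Delta_j$, yielding sequences $c_j$ with $\|c_j\|_{\ell^r}\leq 1$ such that $2^{js}(\|II_j\|_{L^p}+\|III_j\|_{L^p})\leq C c_j\|\partial_x v\|_{B^s_{p,r}}\|f\|_{L^\infty}$.

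The only delicate point is in piece $I_j$: exploiting the difference $S_{j'-1}v(y)-S_{j'-1}v(x)$ through a Taylor expansion is precisely what produces $\|\partial_x v\|_{L^\infty}$ rather than $\|v\|_{L^\infty}$, and what generates the $2^{-j}$ factor that compensates for the derivative $\partial_x$ acting on $\Delta_{j'}f$. Combining the three estimates yields the claimed bound.
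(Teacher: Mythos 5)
Your overall strategy --- Bony's decomposition of the commutator, a kernel/mean-value argument for the paraproduct commutator $[T_v,\Delta_j]\partial_x f$, and the Leibniz identities $T_{\partial_x f}v=\partial_x T_fv-T_f\partial_x v$, $R(v,\partial_x f)=\partial_x R(v,f)-R(\partial_x v,f)$ to shift the derivative from $f$ onto $v$ --- is essentially the paper's proof. Your treatment of $I_j$ is exactly the paper's $R_j^1$, and the paraproduct piece $II_j$ is sound: in $T_fv$ and $T_{\Delta_jf}v$ the block $\Delta_{j'}v$ occurs only for $j'\geq 1$ (it is paired with $S_{j'-1}f$ or $S_{j'-1}\Delta_jf$, which vanish for $j'\leq 0$), so the reverse Bernstein inequality converts $2^{j}\|\Delta_{j'}v\|_{L^p}$ into $C\|\Delta_{j'}\partial_x v\|_{L^p}$ and everything lands on $\|\partial_x v\|_{B^s_{p,r}}\|f\|_{L^\infty}$.

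The gap is in the remainder piece $III_j$. After your rearrangement, besides the harmless difference $\Delta_jR(\partial_xv,f)-R(\partial_xv,\Delta_jf)$ you are left with $\partial_x\bigl(R(v,\Delta_jf)-\Delta_jR(v,f)\bigr)$, which is built from $v$ itself, not from $\partial_xv$. For the blocks $\Delta_kv$ with $k\geq0$ the external derivative (costing $2^{j}$, with $|k-j|\leq2$ by the remainder's support properties) can be traded for $\|\Delta_k\partial_xv\|_{L^p}$, but the block $\Delta_{-1}v$, which interacts with $\Delta_jf$ for $j\leq 2$, cannot: taking $v\equiv1$, the term $\partial_xR(1,\Delta_0f)=\partial_x(\Delta_{-1}+\Delta_0)\Delta_0f$ is generically of size $\|f\|_{B^s_{p,r}}$ while the right-hand side of the lemma vanishes. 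Hence these two remainders cannot be estimated separately by ``standard continuity results''; the cancellation between them must be retained. The paper's remedy is to split $v=S_0v+\widetilde v$ at the outset: the low-frequency part is kept as an intact commutator $[S_0v,\Delta_j]\partial_xf$ and estimated by the same mean-value argument as your $I_j$ (using $\|\partial_xS_0v\|_{L^\infty}\leq C\|\partial_xv\|_{L^\infty}$), while every dyadic block of $\widetilde v$ is spectrally supported in an annulus, so the reverse Bernstein inequality is available in all the remainder terms. With this preliminary splitting your argument goes through; without it, the estimate of $III_j$ for $j\leq 2$ fails.
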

\begin{proof}
	By splitting $v=S_0 v+\widetilde{v}$ and applying Bony’s decomposition, we end up with the expression $R_j =\sum_{i=1}^{8}R_j^i$, where
	\begin{align*}
	&R_{j}^{1}=[T_{\widetilde{v}},\Delta_{j}]\partial_{x}f,~R_{j}^{2}=T_{\partial_{x}\Delta_{j}f}\widetilde{v},~R_{j}^{3}=-\Delta_{j}T_{\partial_{x}f}\widetilde{v},~R_{j}^{4}=\partial_{x}R(\widetilde{v},\Delta_{j}f),\\ &R_{j}^{5}=-R(\partial_{x}\widetilde{v},\Delta_{j}f),~R_{j}^{6}=-\partial_{x}\Delta_{j}R(\widetilde{v},f),~R_{j}^{7}=\Delta_{j}R(\partial_{x}\widetilde{v},f),~R_{j}^{8}=[S_{0}v,\Delta_{j}]\partial_{x}f.
	\end{align*}
	 In the following computations, we denote by $(c_j)_{j\geq -1}$ a sequence such that $\|(c_j)\|_{\ell^r}\leq 1.$ According to the commutator estimates of \cite{BCD}, we have for $s>0$,
	\begin{equation}\label{Ri}
		2^{js}\|R_j^i\|_{L^p}\leq Cc_j\|\partial_x v\|_{L^\infty}\|f\|_{B^s_{p,r}}\quad \text{for}~i\neq 3.
	\end{equation}
	Now, for $R_j^3=-\Delta_j T_{\partial_{x}f}\widetilde{v},$ we have
	$$R_j^3=-\sum_{|j-j'|\leq 4}\Delta_j(S_{j'-1}\partial_{x} f\Delta_{j'}\widetilde{v})=-\sum_{\substack{|j-j'|\leq 4\\j''\leq j'-2}}\Delta_j(\Delta_{j''}\partial_{x} f\Delta_{j'}\widetilde{v}),$$
	which implies that
	\begin{align}\label{R3}
	2^{js}\|R_j^3\|_{L^p}&\leq C\sum_{\substack{|j-j'|\leq 4\\j''\leq j'-2}}2^{js}\|\Delta_{j''}\partial_{x} f\|_{L^\infty}\|\Delta_{j'}\widetilde{v}\|_{L^p}\notag \\
	&\leq C\sum_{\substack{|j-j'|\leq 4\\j''\leq j'-2}}2^{(j-j')s}2^{j''-j'}2^{j's}\|\Delta_{j''}f\|_{L^\infty}\|\Delta_{j'}\partial_{x}\widetilde{v}\|_{L^p}\notag \\
	&\leq C\|f\|_{L^\infty}\sum_{|j-j'|\leq 4}2^{j's}\|\Delta_{j'}\partial_{x}\widetilde{v}\|_{L^p}\notag \\
	&\leq Cc_j\|f\|_{L^\infty}\|\partial_x v\|_{B^s_{p,r}}.
	\end{align}
	Combining inequalities \eqref{Ri} and \eqref{R3} completes the proof.
\end{proof}
Now, we state a blow-up criterion for \eqref{eqn} as follows.

\begin{theo}\label{thm4.2}
	Let $1\leq p,r\leq \infty$, $s>\max(\frac{1}{2},\frac{1}{p})$ and $n_0\in B^s_{p,r}$. Let $T>0$ denote the maximal existence time of the corresponding solution $n$ to \eqref{eqn}. If $T$ is finite, then we have 
	\begin{equation}
	\int_0^T\|u(t)\|_{W^{1,\infty}}dt\int_{0}^T\|u(t)\|_{W^{1,\infty}}\|n(t)\|_{B^0_{\infty,\infty}}dt=\infty.
	\end{equation}  
\end{theo}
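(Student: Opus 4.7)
The plan is to proceed by contradiction: assume that both $\int_0^T\|u\|_{W^{1,\infty}}\,dt$ and $\int_0^T\|u\|_{W^{1,\infty}}\|n\|_{B^0_{\infty,\infty}}\,dt$ are finite and show that $\|n\|_{L^\infty([0,T];B^s_{p,r})}<\infty$. Once this uniform bound is obtained, Theorem \ref{the3.1} lets us extend the solution strictly beyond $T$, contradicting the maximality of $T$.

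First I would apply $\Delta_j$ to equation \eqref{eqn}, perform the standard $L^p$ energy estimate on the resulting localized transport equation (with drift $v=u^2+u_x^2$), then multiply by $2^{js}$ and take the $\ell^r$ norm in $j$, so as to reach
\begin{equation*}
\frac{d}{dt}\|n\|_{B^s_{p,r}}\leq C\|\partial_x v\|_{L^\infty}\|n\|_{B^s_{p,r}}+C\|G(u)\|_{B^s_{p,r}}+\bigl\|\bigl(2^{js}\|R_j\|_{L^p}\bigr)_{j\geq-1}\bigr\|_{\ell^r},
\end{equation*}
where $R_j=[v\partial_x,\Delta_j]n$. Lemma \ref{lem4.1} then controls the commutator term by $C\|\partial_x v\|_{L^\infty}\|n\|_{B^s_{p,r}}+C\|\partial_x v\|_{B^s_{p,r}}\|n\|_{L^\infty}$.

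The next step is to estimate $\|\partial_x v\|_{L^\infty}$, $\|\partial_x v\|_{B^s_{p,r}}$ and $\|G(u)\|_{B^s_{p,r}}$. For this I would use the Moser-type inequalities of Lemma \ref{Moser}, the boundedness of the Fourier multipliers $P_1(D),\partial_x P_1(D),\partial_x^2P_1(D)$ from Lemma \ref{bdd oper}, and the identity $u=P_1(D)n$ (which yields $\|u\|_{B^{\sigma+2}_{p,r}}\leq C\|n\|_{B^\sigma_{p,r}}$ and $\|u_{xx}\|_{L^\infty}\leq C\|n\|_{L^\infty}$). To rewrite the outcome in terms of the hypothesis integrands I would rely on two conversions: the embedding $\|u\|_{W^{1,\infty}}\leq C\|n\|_{B^0_{\infty,\infty}}$ (obtained by combining the $S^{-2}$-multiplier property of $P_1(D)$ with $B^2_{\infty,\infty}\hookrightarrow L^\infty$; in particular $\|u\|_{W^{1,\infty}}^2\leq C\|u\|_{W^{1,\infty}}\|n\|_{B^0_{\infty,\infty}}$), and the logarithmic interpolation of Proposition \ref{basic}(8) together with $B^s_{p,r}\hookrightarrow B^\epsilon_{\infty,\infty}$ for some $0<\epsilon<s-\frac{1}{p}$, which gives
\begin{equation*}
\|n\|_{L^\infty}\leq C\|n\|_{B^0_{\infty,\infty}}\log\Bigl(e+\frac{\|n\|_{B^s_{p,r}}}{\|n\|_{B^0_{\infty,\infty}}}\Bigr).
\end{equation*}
Assembling the pieces, the target is a differential inequality of the shape
\begin{equation*}
\frac{d}{dt}\|n\|_{B^s_{p,r}}\leq C\bigl(\|u\|_{W^{1,\infty}}+\|u\|_{W^{1,\infty}}\|n\|_{B^0_{\infty,\infty}}\log(e+\|n\|_{B^s_{p,r}})\bigr)\|n\|_{B^s_{p,r}}.
\end{equation*}

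Setting $Y(t)=\log(e+\|n(t)\|_{B^s_{p,r}})$ reduces this to $Y'(t)\leq C\|u\|_{W^{1,\infty}}+C\|u\|_{W^{1,\infty}}\|n\|_{B^0_{\infty,\infty}}Y$, and Gronwall's inequality together with the two finiteness hypotheses forces $Y(T)<\infty$, which is the uniform $B^s_{p,r}$ bound we need. The main obstacle lies in the third paragraph: the cubic nonlinearity $G(u)$ — through terms such as $uu_{xx}^2$ and $u_xu_{xx}^2$ — and the product $\|\partial_x v\|_{B^s_{p,r}}\|n\|_{L^\infty}$ both generate $\|u_{xx}\|_{L^\infty}^2\lesssim\|n\|_{L^\infty}^2$ contributions under a naive Moser expansion. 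Shaping these into the linear-in-$\|n\|_{B^0_{\infty,\infty}}$ structure above (as opposed to a $\|n\|_{B^0_{\infty,\infty}}^2\log^2$ structure, which would turn the inequality for $Y$ into a Riccati equation allowing finite-time blow-up) requires distributing each $L^\infty$ factor carefully, using $\|u\|_{W^{1,\infty}}\lesssim\|n\|_{B^0_{\infty,\infty}}$ to absorb one copy of $\|n\|_{L^\infty}$ before applying the logarithmic interpolation to the remaining one.
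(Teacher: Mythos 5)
Your overall architecture is exactly the paper's: localize with $\Delta_j$, run the $L^p$ energy estimate, control the commutator with Lemma \ref{lem4.1}, estimate $G(u)$ by Moser-type inequalities, convert $\|n\|_{L^\infty}$ via the logarithmic interpolation inequality, and close with a double Gronwall argument plus the continuation criterion from Theorem \ref{the3.1}. The target differential inequality you write down is precisely the one the paper derives. The gap is in the step you yourself flag as the main obstacle, and your proposed fix does not resolve it. The dangerous terms are those in which $\|u_{xx}\|_{L^\infty}$ appears twice, e.g.\ the contribution $\|u_{xx}\|_{L^\infty}^2\|u_x\|_{B^s_{p,r}}$ coming from $u_xu_{xx}^2$. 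Your tool, the embedding $\|u\|_{W^{1,\infty}}\leq C\|n\|_{B^0_{\infty,\infty}}$ (which is correct, via $u=P_1(D)n$ and $B^2_{\infty,\infty}\hookrightarrow W^{1,\infty}$), only converts factors involving $u$ and $u_x$; it cannot absorb a copy of $\|u_{xx}\|_{L^\infty}$ or of $\|n\|_{L^\infty}$, since neither is controlled by $\|u\|_{W^{1,\infty}}$ nor by $\|n\|_{B^0_{\infty,\infty}}$ ($B^0_{\infty,\infty}$ does not embed into $L^\infty$). So as stated your expansion still leaves a $\|n\|_{L^\infty}^2\|n\|_{B^s_{p,r}}$ term, which after logarithmic interpolation produces the quadratic (Riccati-type) structure you correctly identify as fatal.

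The paper's resolution is a different, sharper product law based on Bony's decomposition: for the inner product $u_xu_{xx}$ one uses
\begin{equation*}
\|u_xu_{xx}\|_{B^s_{p,r}}\leq C\bigl(\|u_x\|_{L^\infty}\|u_{xx}\|_{B^s_{p,r}}+\|u_{xx}\|_{B^{-1}_{\infty,\infty}}\|u_x\|_{B^{s+1}_{p,r}}\bigr),
\end{equation*}
and then the crucial trade $\|u_{xx}\|_{B^{-1}_{\infty,\infty}}\leq C\|u_x\|_{B^0_{\infty,\infty}}\leq C\|u_x\|_{L^\infty}$ together with $\|u_x\|_{B^{s+1}_{p,r}}\leq C\|n\|_{B^s_{p,r}}$ (since $\partial_xP_1(D)$ is an $S^{-1}$-multiplier). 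This places one of the two second-derivative factors at negative regularity, where it is bounded by $\|u\|_{W^{1,\infty}}$ rather than by $\|n\|_{L^\infty}$, and yields $\|G(u)\|_{B^s_{p,r}}+\|(2^{js}\|R_j\|_{L^p})_j\|_{\ell^r}\leq C\|u\|_{W^{1,\infty}}\|n\|_{L^\infty}\|n\|_{B^s_{p,r}}$ with exactly one factor of $\|n\|_{L^\infty}$, to which the logarithmic interpolation is then applied. You should replace the "absorb one copy via $\|u\|_{W^{1,\infty}}\lesssim\|n\|_{B^0_{\infty,\infty}}$" step by this asymmetric paraproduct estimate (and apply the same device to $uu_{xx}^2$ and to $\|\partial_x(u^2+u_x^2)\|_{B^s_{p,r}}\|n\|_{L^\infty}$ in the commutator bound); the rest of your argument then goes through as planned.
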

\begin{proof}
	For all $j\geq -1$, applying the operator $\Delta_j$ to the equation \ref{eqn}  yields
	\begin{equation}\label{eqnj}
	\left\{\begin{array}{l}
	\partial_t(\Delta_jn)+(u^{2}+u_{x}^{2})\partial_{x}(\Delta_jn)=R_j+\Delta_j G(u),\\
	(\Delta_jn)|_{t=0}=\Delta_jn_{0},
	\end{array}\right.
	\end{equation} 
	with $R_j\triangleq[(u^{2}+u_{x}^{2})\cdot\partial_{x},\Delta_j]n$.
	Multiplying both sides of \eqref{eqnj} by $sgn(\Delta_jn)|\Delta_jn|^{p-1}$ and integrating over $\mathbb{R}$, we have
	\begin{align*}
	\|\Delta_{j}n(t)\|_{L^{p}}&\leq \|\Delta_{j}n_{0}\|_{L^{p}}+\int_{0}^t\frac{1}{p}\left\|\partial_{x}(u^2+u_x^2)(t^{\prime})\right\|_{L^{\infty}}\left\|\Delta_{j}n(t^{\prime})\right\|_{L^{p}}dt'+\int_{0}^{t}\left\|\Delta_{j}G(t^{\prime})\right\|_{L^{p}}dt^{\prime}\\
	&~~~+\int_{0}^{t}\left\|R_{j}(t^{\prime})\right\|_{L^{p}}dt^{\prime}.
	\end{align*}
	Multiplying both sides of the above inequality by $2^{js}$ and taking the $\ell^r$-norm yields
	\begin{align}\label{n_bspr}
	\|n(t)\|_{B^s_{p,r}}&\leq \|n_{0}\|_{B^s_{p,r}}+C\int_{0}^t\|u_x(t')\|_{L^{\infty}}\|n(t')\|_{L^{\infty}}\|n(t^{\prime})\|_{B^s_{p,r}}dt'+\int_{0}^{t}\|G(t^{\prime})\|_{B^s_{p,r}}dt^{\prime}\notag\\
	&~~~+\int_{0}^{t}\|(2^{js}\|R_{j}(t^{\prime})\|_{L^{p}})_{j\geq -1}\|_{\ell^r}dt^{\prime}.
	\end{align}
	We now proceed to estimate the third and fourth terms on the right-hand side of \eqref{n_bspr}. Regarding the third term, we will use Bony's decomposition to obtain a more refined estimate than that in the proof of Theorem \ref{the3.1}. In fact, by using Lemmas \ref{bdd oper} and \ref{Moser},  together with the fact that $L^\infty \hookrightarrow B^0_{\infty,\infty}$, we have:
	\begin{align}\label{G_1}
	\|G_1(u)\|_{B^s_{p,r}}&= \|2u_xu_{xx}^2\|_{B^{s}_{p,r}}&\notag\\
	&\leq C\Big(\|u_xu_{xx}\|_{L^\infty}\|u_{xx}\|_{B^s_{p,r}}+\|u_{xx}\|_{L^\infty}\|u_xu_{xx}\|_{B^s_{p,r}}\Big)\notag\\
	&\leq C\Big(\|u_x\|_{L^\infty}\|n\|_{L^\infty}\|n\|_{B^s_{p,r}}+\|u_{xx}\|_{L^\infty}(\|u_x\|_{L^\infty}\|u_{xx}\|_{B^s_{p,r}}+\|u_{xx}\|_{B^{-1}_{\infty,\infty}}\|u_{x}\|_{B^{s+1}_{p,r}})\Big)\notag\\
	&\leq C\Big(\|u_x\|_{L^\infty}\|n\|_{L^\infty}\|n\|_{B^s_{p,r}}+\|u_{xx}\|_{L^\infty}(\|u_x\|_{L^\infty}\|n\|_{B^s_{p,r}}+\|u_{x}\|_{B^{0}_{\infty,\infty}}\|n\|_{B^{s}_{p,r}})\Big)\notag\\
	&\leq C\|u_x\|_{L^{\infty}}\|n\|_{L^{\infty}}\|n\|_{B^s_{p,r}},
	\end{align}
	\begin{align}\label{G_2}
	\|G_2(u)\|_{B^s_{p,r}}
	&=\|-\frac{5}{3}u^3-2uu_x^2-3u^2u_{xx}+16u_x^2u_{xx}-uu_{xx}^2\|_{B^{s}_{p,r}}\notag\\
	&\leq C\Big(\|u\|_{L^{\infty}}\|n\|_{L^{\infty}}\|n\|_{B^s_{p,r}} +(\|u_x\|_{L^{\infty}}\|uu_x\|_{B^s_{p,r}}+\|uu_x\|_{L^{\infty}}\|u_x\|_{B^s_{p,r}})\notag\\ 
	&~~~ +(\|u\|_{L^{\infty}}\|uu_{xx}\|_{B^s_{p,r}}+\|uu_{xx}\|_{L^{\infty}}\|u\|_{B^s_{p,r}})+(\|u_x\|_{L^{\infty}}\|u_xu_{xx}\|_{B^s_{p,r}}+\|u_xu_{xx}\|_{L^{\infty}}\notag\\ &~~~\times\|u_x\|_{B^s_{p,r}}) +(\|u_{xx}\|_{L^{\infty}}\|uu_{xx}\|_{B^s_{p,r}}+\|uu_{xx}\|_{L^{\infty}}\|u_{xx}\|_{B^s_{p,r}}) \Big)\notag\\
	&\leq C(\|u\|_{L^{\infty}}+\|u_x\|_{L^{\infty}})\|n\|_{L^{\infty}}\|n\|_{B^s_{p,r}},
	\end{align}
	and
	\begin{align}\label{G_3}
	\|G_3(u)\|_{B^s_{p,r}}&=\|-u_xu_{xx}^2-2uu_xu_{xx}\|_{B^{s}_{p,r}}\notag\\
	&\leq C\Big(\|u_x\|_{L^{\infty}}\|n\|_{L^{\infty}}\|n\|_{B^s_{p,r}} +(\|u_{x}\|_{L^{\infty}}\|uu_{xx}\|_{B^s_{p,r}}+\|uu_{xx}\|_{L^{\infty}}\|u_{x}\|_{B^s_{p,r}}) \Big)\notag\\
	&\leq C(\|u\|_{L^{\infty}}+\|u_x\|_{L^{\infty}})\|n\|_{L^{\infty}}\|n\|_{B^s_{p,r}}.
	\end{align}	
	Combining estimates \eqref{G_1}, \eqref{G_2} and \eqref{G_3} yields
	\begin{align}\label{G_est2}
	\|G(u)\|_{B^{s}_{p,r}}&\leq C\Big(\|G_1(u)\|_{B^{s}_{p,r}}+\|G_2(u)\|_{B^{s}_{p,r}}+\|G_3(u)\|_{B^{s}_{p,r}}\Big)\notag\\
	&\leq C\|u\|_{W^{1,\infty}}\|n\|_{L^{\infty}}\|n\|_{B^s_{p,r}}.
	\end{align}
	For the fourth term, applying Lemma \ref{lem4.1} with $v=u^2+u_x^2$ and using Bony's decomposition, we obtain
	\begin{align}\label{Rj_est}
	\|(2^{js}\|R_{j}\|_{L^{p}})_{j\geq -1}\|_{\ell^r}&\leq C\Big(\|\partial_x (u^2+u_x^2)\|_{L^\infty}\|n\|_{B^s_{p,r}}+\|\partial_x (u^2+u_x^2)\|_{B^s_{p,r}}\|n\|_{L^\infty}\Big)\notag\\
	&\leq C\Big(\|u_x\|_{L^{\infty}}\|n\|_{L^{\infty}}\|n\|_{B^s_{p,r}}+\|uu_x+u_xu_{xx}\|_{B^s_{p,r}}\|n\|_{L^{\infty}}\Big)\notag\\
	&\leq C\|u\|_{W^{1,\infty}}\|n\|_{L^{\infty}}\|n\|_{B^s_{p,r}}.
	\end{align}
	Plugging \eqref{G_est2} and \eqref{Rj_est} into \eqref{n_bspr}, we thus derive
	\begin{align}\label{n_bspr1}
	\|n(t)\|_{B^s_{p,r}}&\leq \|n_{0}\|_{B^s_{p,r}}+C\int_{0}^t\|u(t')\|_{W^{1,\infty}}\|n(t')\|_{L^{\infty}}\|n(t^{\prime})\|_{B^s_{p,r}}dt'.
	\end{align}
	Since $s > \frac{1}{p}$, we can choose $\epsilon \in (0, 1)$ with $s > \frac{1}{p} + \epsilon$. Applying the logarithmic interpolation inequality in Proposition \ref{basic} yields  
	\begin{align*}
	\|n\|_{L^\infty}&\leq C\|n\|_{B^0_{\infty,\infty}}\Big(1+\log\frac{\|n\|_{B^\epsilon_{\infty,\infty}}}{\|n\|_{B^0_{\infty,\infty}}}\Big)\leq C\Big( 1+\|n\|_{B^0_{\infty,\infty}}\log(e+\|n\|_{B^s_{p,r}}) \Big).
	\end{align*}
	Plugging the above estimate into \eqref{n_bspr1} and applying Gronwall's inequality yields
	\begin{align*}
	\|n(t)\|_{B^s_{p,r}}&\leq \|n_{0}\|_{B^s_{p,r}}\exp \Big(C\int_{0}^t\|u\|_{W^{1,\infty}}( 1+\|n\|_{B^0_{\infty,\infty}}\log(e+\|n\|_{B^s_{p,r}}))dt' \Big),
	\end{align*}
	which implies 
	\begin{align*}
	\log(e+\|n(t)\|_{B^s_{p,r}})&\leq \log(e+\|n_{0}\|_{B^s_{p,r}})+ C\int_{0}^t\|u\|_{W^{1,\infty}}( 1+\|n\|_{B^0_{\infty,\infty}}\log(e+\|n\|_{B^s_{p,r}}))dt'.
	\end{align*}
	Applying Gronwall's inequality once more, we obtain
	\begin{align}
	\log(e+\|n(t)\|_{B^s_{p,r}})&\leq \Big( \log(e+\|n_{0}\|_{B^s_{p,r}})+ C\int_{0}^t\|u\|_{W^{1,\infty}}dt'\Big)\exp\Big(C\int_{0}^t\|u\|_{W^{1,\infty}}\|n\|_{B^0_{\infty,\infty}}dt'\Big).
	\end{align}
	
	Therefore, we can conclude that if $$\int_{0}^T\|u(t)\|_{W^{1,\infty}}dt<\infty \quad \text{and} \quad  \int_0^T\|u(t)\|_{W^{1,\infty}}\|n(t)\|_{B^0_{\infty,\infty}}dt<\infty,$$
	we have $\|n\|_{L_T^\infty(B^s_{p,r})}<\infty$. Then, we infer that there exists $T_0$ which is small enough such that $4C^3\|n\|_{L_T^\infty (B^{s}_{p,r})}^2T_0 < 1$. We now choose $\delta<T_0$ such that $4C^3\|n(T-\delta)\|_{B^{s}_{p,r}}^2T_0 < 1$, then following same line of the proof of Theorem \ref{the3.1}, we can extend the solution $n$ on $[0,T)\times\mathbb{R}$ to $\widetilde{n}$ on $[0,T-\delta+T_0)\times\mathbb{R}$ which stands in contradiction to the definition of $T$.
\end{proof}

Noting that equation \eqref{eqn} possesses a conserved quantity $E(u)=\int_{\mathbb{R}}(u^2+2u_x^2+u_{xx}^2)dx$ in \cite{Li.Z}, we immediately obtain the following corollary of Theorem \ref{thm4.2}.

\begin{coro}\label{cor4.3}
	Under the hypotheses of Theorem \ref{thm4.2}, assume that $u_0 \in H^2$. Let $T>0$ denote the maximal existence time of the corresponding solution $n$ to \eqref{eqn}. If $T$ is finite, it follows that
	$$ \int_0^T\|n(t)\|_{B^0_{\infty,\infty}}dt=\infty.$$
	
\end{coro}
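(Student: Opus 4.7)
\medskip

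\noindent\textbf{Proof proposal for Corollary \ref{cor4.3}.}

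The plan is to upgrade Theorem \ref{thm4.2} by exploiting the conserved energy $E(u)$ to produce a uniform in time bound on $\|u(t)\|_{W^{1,\infty}}$, after which the conclusion is immediate. First I would observe the purely algebraic identity
\begin{equation*}
E(u)=\int_{\mathbb{R}}\bigl(u^{2}+2u_{x}^{2}+u_{xx}^{2}\bigr)\,dx
\;=\;\|u\|_{L^{2}}^{2}+2\|u_{x}\|_{L^{2}}^{2}+\|u_{xx}\|_{L^{2}}^{2},
\end{equation*}
which is equivalent to $\|u\|_{H^{2}}^{2}$ (with explicit constants independent of $u$). Hence, for the solution $u$ associated to $n$ on $[0,T)$, combining the assumption $u_{0}\in H^{2}$ with the conservation of $E$ gives
\begin{equation*}
\|u(t)\|_{H^{2}}^{2}\;\leq\;C\,E(u(t))\;=\;C\,E(u_{0})\qquad\text{for all }t\in[0,T).
\end{equation*}

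Next, I would invoke the one-dimensional Sobolev embedding $H^{2}(\mathbb{R})\hookrightarrow W^{1,\infty}(\mathbb{R})$ to deduce a uniform in time control
\begin{equation*}
\|u(t)\|_{W^{1,\infty}}\;\leq\;C\,\|u(t)\|_{H^{2}}\;\leq\;C\sqrt{E(u_{0})},\qquad t\in[0,T).
\end{equation*}
Since $T<\infty$ by assumption, integrating in time yields
\begin{equation*}
\int_{0}^{T}\|u(t)\|_{W^{1,\infty}}\,dt\;\leq\;CT\sqrt{E(u_{0})}\;<\;\infty.
\end{equation*}

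Now I would apply Theorem \ref{thm4.2}: the product
\begin{equation*}
\Bigl(\int_{0}^{T}\|u(t)\|_{W^{1,\infty}}\,dt\Bigr)\Bigl(\int_{0}^{T}\|u(t)\|_{W^{1,\infty}}\|n(t)\|_{B^{0}_{\infty,\infty}}\,dt\Bigr)=\infty,
\end{equation*}
and the first factor being finite forces
\begin{equation*}
\int_{0}^{T}\|u(t)\|_{W^{1,\infty}}\|n(t)\|_{B^{0}_{\infty,\infty}}\,dt=\infty.
\end{equation*}
Using the uniform bound $\|u(t)\|_{W^{1,\infty}}\leq C\sqrt{E(u_{0})}$ one final time, this forces
\begin{equation*}
\int_{0}^{T}\|n(t)\|_{B^{0}_{\infty,\infty}}\,dt=\infty,
\end{equation*}
as claimed.

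The only nontrivial point, and the step I expect to need the most care, is the justification that $E(u(t))=E(u_{0})$ actually holds for the solution $n\in E^{s}_{p,r}(T)$ produced by Theorem \ref{the3.1} when one additionally assumes $u_{0}\in H^{2}$. This is a persistence of regularity plus approximation issue: one must either show that the $B^{s}_{p,r}$ solution also lies in a functional class where the conservation law can be derived by direct integration by parts, or approximate $n_{0}$ by smooth data, use conservation for the smooth solutions, and pass to the limit via the continuous dependence established in Theorem \ref{the3.1}. Once this is in hand, the rest of the argument is an essentially immediate corollary.
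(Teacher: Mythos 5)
Your argument is exactly the one the paper intends: it deduces the corollary from Theorem \ref{thm4.2} by using conservation of $E(u)$ and the embedding $H^2(\mathbb{R})\hookrightarrow W^{1,\infty}(\mathbb{R})$ to bound $\|u(t)\|_{W^{1,\infty}}$ uniformly on $[0,T)$, which the paper leaves implicit by stating the corollary follows ``immediately.'' Your closing caveat about justifying $E(u(t))=E(u_0)$ for Besov-class solutions is a fair point that the paper also does not address, but the overall approach coincides with the paper's.
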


We now consider the equivalent equation \eqref{eq2} corresponding to \eqref{eqn}. Denote by $C_1$ a constant satisfying $C_1 < \widetilde{C}=(\frac{1}{4\pi})^{\frac{1}{4}}(\frac{\Gamma(\frac{3}{2})}{\Gamma(2)})^{\frac{1}{2}}=\frac{1}{2}$, where $\widetilde{C}$ is the sharp imbedding constant of Sobolev inequality which is given in \cite{Mo2001}. We will provide sufficient conditions for the blow-up data to the initial-value problem \eqref{eq2}. Our blow-up result can be stated as follows.
\begin{theo}\label{thm4.4}
	Let $1\leq p, r\leq \infty,~ s>\max(\frac{1}{2},\frac{1}{p}), ~n_0=(1-\partial_x^2)u_0\in B^s_{p,r}$ and $u_0\in H^2.$ Denote by $C$ the constant in \eqref{2}.
	Assume that there exists a point $x_0\in\mathbb{R}$ such that
	\begin{equation}\label{con}
	|u_{0x}(x_0)|\geq C_0\triangleq C_1\|u_0\|_{H^2}, ~
	u_{0x}(x_0)u_{0xx}(x_0)\leq -2\omega_0\sqrt{K}
	\end{equation}
	with $$T_1\triangleq\min\{\frac{C_0}{32\|u_0\|_{H^2}^3},\frac{1}{4C^3\|n_0\|_{B^s_{p,r}}^2}\},~  K\triangleq34\|u_0\|_{H^2}^4-\frac{1}{8}C_0^4+\frac{3025}{4C_0^2}\|u_0\|_{H^2}^6>0,~ \omega_0\triangleq1+\frac{2}{T_1\sqrt{K}}.$$
	Then, the solution $u$ will blow up within $[0,T_1]$.\par 
	 More precisely, $u$ will blow up at the time $T_2=\frac{1}{\sqrt{K}}\ln\frac{-2\sqrt{K}+u_{0x}(x_0)u_{0xx}(x_0)}{2\sqrt{K}+u_{0x}(x_0)u_{0xx}(x_0)}$. 
\end{theo}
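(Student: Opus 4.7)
The strategy is to follow $u_x$ and $u_{xx}$ along the characteristic starting from $x_0$. Let $y(t,x)$ solve \eqref{eqflow} and set $m(t):=u_x(t,y(t,x_0))$, $\mu(t):=u_{xx}(t,y(t,x_0))$. Differentiating \eqref{eq2} once and twice in $x$ and noting that $(u^2+u_x^2)u_{xxx}$ cancels against $\dot y\,u_{xxx}$ along the characteristic, one obtains
\begin{align*}
m'(t)&=-2\,u(t,y)\,m^2+\Phi_1(t,y),\\
\mu'(t)&=-2m^3-6\,u(t,y)\,m\mu-2m\mu^2+\Phi_2(t,y),
\end{align*}
where $\Phi_i=\partial_x^i[\text{RHS of \eqref{eq2}}]$. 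The explicit Green kernel $\tfrac14(1+|\cdot|)e^{-|\cdot|}$ of $P(D)$ together with Lemma~\ref{bdd oper} yields $L^1$-$L^\infty$ and $L^2$-$L^\infty$ bounds for the operators $\partial_x^kP(D)$, and combined with $H^2\hookrightarrow W^{1,\infty}$ these give $\|\Phi_i\|_{L^\infty}\le C\|u\|_{H^2}^3$.

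The uniform control of $u$ comes from the conservation law. From $E(u(t))=E(u_0)$ and $\|u\|_{H^2}^2\le E(u)\le 2\|u\|_{H^2}^2$ we deduce $\|u(t)\|_{H^2}\le\sqrt{2}\,\|u_0\|_{H^2}$, and the sharp 1-D Sobolev embedding $\|u\|_{L^\infty},\|u_x\|_{L^\infty}\le\widetilde C\,\|u\|_{H^2}$ with $\widetilde C=1/2$ controls $u,u_x$ uniformly. Plugging these into the $m$-equation yields $|m'(t)|\le C\|u_0\|_{H^2}^3$ independently of $m$ itself, so since $|m(0)|=|u_{0x}(x_0)|\ge C_0$ and $T_1\le C_0/(32\|u_0\|_{H^2}^3)$, a direct integration delivers the local boundedness $|m(t)|\ge C_0/2$ for all $t\in[0,T_1]$.

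The heart of the argument is a Riccati inequality for $f:=m\mu$. The product rule gives
$$f'(t)=-2m^4-8um^2\mu-2f^2+\mu\Phi_1+m\Phi_2,$$
and Young's inequality with carefully chosen weights controls $|8um^2\mu|$ and $|\mu\Phi_1|$ (the latter via $|\mu|\le 2|f|/C_0$ obtained from the local boundedness step) so as to leave the coefficient $-\tfrac14$ in front of $f^2$; the term $-2m^4\le-C_0^4/8$ is retained. Substituting the uniform bounds on $\|u\|_{H^2}$, $\|u\|_{L^\infty}$, $\|u_x\|_{L^\infty}$ and $\|\Phi_i\|_{L^\infty}$ produces exactly
$$f'(t)\le K-\tfrac14 f^2(t),\qquad t\in[0,T_1],$$
with $K$ the constant in the statement; the three summands $34\|u_0\|_{H^2}^4$, $-\tfrac18 C_0^4$, and $\tfrac{3025}{4C_0^2}\|u_0\|_{H^2}^6$ correspond respectively to the $u^2m^2$ plus $\|u_x\|_\infty\|\Phi_2\|_\infty$ contributions, to the $-2m^4$ term, and to the $\|\Phi_1\|_\infty^2$ contribution. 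Comparing $f$ with the solution $g$ of the Riccati ODE $g'=K-\tfrac14 g^2$, $g(0)=u_{0x}(x_0)u_{0xx}(x_0)\le-2\omega_0\sqrt K<-2\sqrt K$, separation of variables shows $g(t)\to-\infty$ at exactly $T_2=(1/\sqrt K)\ln[(-2\sqrt K+g(0))/(2\sqrt K+g(0))]$, and Riccati comparison gives $f\le g$, so $f(t)\to-\infty$ at some $t^*\le T_2$. Since $|m|$ stays bounded, this forces $|\mu(t)|=|u_{xx}(t,y(t,x_0))|\to\infty$, so the solution cannot remain in $B^s_{p,r}$ past $t^*$ (alternatively, invoke Corollary~\ref{cor4.3}). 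A direct check using $\omega_0=1+2/(T_1\sqrt K)$ shows $T_2=(1/\sqrt K)\ln(1+T_1\sqrt K)\le T_1$ (since $\ln(1+x)\le x$), so the Riccati inequality is valid throughout $[0,T_2]$ and the argument closes.

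\smallskip
\noindent\textbf{Main obstacle.} The delicate point is the numerical bookkeeping in the Riccati step: each Young splitting fixes an $(\varepsilon,1/\varepsilon)$ pair, and only a particular choice simultaneously produces the clean coefficient $-\tfrac14$ in front of $f^2$ and the exact constants $34$, $1/8$, $3025/4$ in $K$. A secondary subtlety is bounding $\|\Phi_i\|_{L^\infty}$: $F_2$ and $F_3$ contain $u_{xx}^2$-type terms that live only in $L^1$ (since $u_{xx}$ is only in $L^2$), so one must exploit the smoothing of $\partial_x^kP(D)$ through its explicit Green kernel rather than relying on $L^\infty$ bounds on $u_{xx}$.
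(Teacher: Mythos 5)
Your proposal follows essentially the same route as the paper: track $q=u_xu_{xx}$ along the characteristics of \eqref{eqflow}, use the conserved quantity $E(u)$ to get $\|u(t)\|_{H^2}\le\sqrt2\,\|u_0\|_{H^2}$, prove the lower bound $|u_x(t,y(t,x_0))|\ge C_0/2$ on $[0,T_1]$ from the $u_x$-equation, absorb the cross terms by Young's inequality into a Riccati inequality $q_t\le -\tfrac14 q^2+K$, and close by ODE comparison together with $\ln(1+x)\le x$ and the definition of $\omega_0$ to get $T_2\le T_1$. All of that matches the paper's proof.

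There is, however, one step that fails as written: the claim that $\|\Phi_2\|_{L^\infty}\le C\|u\|_{H^2}^3$ for $\Phi_2=\partial_x^2[\text{RHS of \eqref{eq2}}]$. That quantity contains $\partial_x^4P(D)F_3$, and $\partial_x^4P(D)$ is \emph{not} smoothing: its kernel is $\partial_x^4\bigl(\tfrac14 e^{-|x|}(1+|x|)\bigr)$, which contains a Dirac mass, so there is no $L^1\to L^\infty$ bound and the term $F_3=u_xu_{xx}^2+4uu_xu_{xx}$ (only in $L^1$ when $u\in H^2$) cannot be controlled in $L^\infty$ by $\|u\|_{H^2}^3$. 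Your own "main obstacle" remark about exploiting the Green kernel does not rescue this, precisely because the obstruction is the identity part of $\partial_x^4P(D)$. The repair is the paper's identity $\partial_x^4P(D)=\mathrm{Id}-P(D)+2\partial_x^2P(D)$: the $\mathrm{Id}$-part of $\partial_x^4P(D)F_3$ is the pointwise quantity $m\mu^2+4um\mu$ and must be moved into the local dynamics, which turns your coefficients $-6um\mu-2m\mu^2$ into the paper's $-2um\mu-m\mu^2$ (equivalently, turns your $-2f^2-8um^2\mu$ into $-f^2-4um^2\mu$ in the $f$-equation); the remaining nonlocal pieces $-P(D)F_3+2\partial_x^2P(D)F_3$ do admit $L^1\to L^\infty$ kernel bounds. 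With that correction your Riccati bookkeeping goes through and reproduces the constants in $K$; without it the stated bound on $\Phi_2$ is simply false and the coefficient $-\tfrac14$ in front of $f^2$ cannot be justified.
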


\begin{proof}
	Define $q(t,x)=u_x(t,x)u_{xx}(t,x)$. Thanks to \eqref{eq2} and $(1-2\partial_x^2+\partial_x^4)P(D)=Id$, we note that $u_x$ and $u_{xx}$ satisfy 
	\begin{equation}\label{u_xt}
	u_{xt}+(u^{2}+u_{x}^{2})u_{xx}+2uu_x^2=\partial_xP(D)F_1+\partial_x^2P(D)F_2+\partial_x^3P(D)F_3,
	\end{equation}
	and
	\begin{equation}\label{u_xxt}
	u_{xxt}+(u^{2}+u_{x}^{2})u_{xxx}+2uu_xu_{xx}+2u_x^3+u_xu_{xx}^2=\partial_x^2P(D)F_1+\partial_x^3P(D)F_2-P(D)F_3+2\partial_x^2P(D)F_3.
	\end{equation}
	We thus have
	\begin{align*}
	q_t+q_x(u^2+u_x^2)&=u_{xt}u_{xx}+u_xu_{xxt}+(u_{xx}^2+u_xu_{xxx})(u^2+u_x^2) \notag\\
	&=-u_{x}^2u_{xx}^2-4uu_{x}^2u_{xx}-2u_x^4+u_{xx}(\partial_xP(D)F_1+\partial_x^2P(D)F_2+\partial_x^3P(D)F_3)\notag\\
	&~~~~~~+u_x(\partial_x^2P(D)F_1+\partial_x^3P(D)F_2-P(D)F_3+2\partial_x^2P(D)F_3),
	\end{align*}
	where $-4uu_{x}^2u_{xx}=-4(uu_{x})(u_{x}u_{xx})\leq 4(\frac{q^2}{8}+2u^2u_x^2)\leq\frac{1}{2}q^2+8\|u\|_{L^\infty}^2\|u_x\|_{L^\infty}^2$ which implies that
	\begin{align}\label{q1}
	q_t+q_x(u^2+u_x^2)&\leq -\frac{1}{2}q^2+8\|u\|_{L^\infty}^2\|u_x\|_{L^\infty}^2-2u_x^4+|u_{xx}|\|\partial_xP(D)F_1+\partial_x^2P(D)F_2+\partial_x^3P(D)F_3\|_{L^\infty}\notag\\
	&~~~~~~+\|u_x\|_{L^\infty}\|\partial_x^2P(D)F_1+\partial_x^3P(D)F_2-P(D)F_3+2\partial_x^2P(D)F_3\|_{L^\infty}.
	\end{align}
		
	Prior to the main argument, we derive several  estimates that will play a fundamental role in the forthcoming analysis:
	\begin{equation}\label{|u|}
	|u|=\Big(\int_{-\infty}^{x}uu_{x}dx-\int_{x}^{\infty}uu_{x}dx\Big)^{\frac{1}{2}} \leq\frac{\sqrt{2}}{2}\Big(\int_{-\infty}^{x}(u^{2}+u_{x}^{2})dx+\int_{x}^{\infty}(u^{2}+u_{x}^{2})dx\Big)^{\frac{1}{2}}=\frac{\sqrt{2}}{2}\|u\|_{H^1},
	\end{equation}
	\begin{equation}\label{|u_x|}
	|u_x|=\Big(\int_{-\infty}^{x}u_xu_{xx}dx-\int_{x}^{\infty}u_xu_{xx}dx\Big)^{\frac{1}{2}} \leq\frac{\sqrt{2}}{2}\Big(\int_{-\infty}^{x}(u_x^{2}+u_{xx}^{2})dx+\int_{x}^{\infty}(u_x^{2}+u_{xx}^{2})dx\Big)^{\frac{1}{2}}\leq\frac{\sqrt{2}}{2}\|u\|_{H^2}.
	\end{equation}
	Through computation of $\|\partial_x^i(\frac{1}{4}e^{-|\cdot|}(1+|\cdot |))\|_{L^1}$ and $\|\partial_x^i(\frac{1}{4}e^{-|\cdot|}(1+|\cdot |))\|_{L^\infty}$ for $i\in\{0,1,2,3\}$ and using \eqref{|u|}, \eqref{|u_x|}, we derive the following estimates:
	\begin{equation}\label{eq4.11}
	\|\partial_xP(D)F_1 \|_{L^\infty}\leq \|\partial_x(\frac{1}{4}e^{-|\cdot|}(1+|\cdot |))\|_{L^1}\|F_1\|_{L^\infty}\leq \frac{1}{6}\|u_x\|_{L^\infty}^3\leq \frac{\sqrt{2}}{24}\|u\|_{H^2}^3,
	\end{equation}
	\begin{align}
	\|\partial_x^2P(D)F_2 \|_{L^\infty}&\leq \|\partial_x^2(\frac{1}{4}e^{-|\cdot|}(1+|\cdot |))\|_{L^1}\|-\frac{5}{3}u^3-5uu_x^2\|_{L^\infty} +\|\partial_x^2(\frac{1}{4}e^{-|\cdot|}(1+|\cdot |))\|_{L^\infty}\notag\\
	&~\times\|-3u^2u_{xx}+24u_x^2u_{xx}-uu_{xx}^2\|_{L^1} \notag\\
	&\leq \frac{5\sqrt{2}}{6}\|u\|_{H^2}^3+\frac{1}{4}(\frac{3}{2}\|u\|_{L^\infty}+12\|u_x\|_{L^\infty}+\|u\|_{L^\infty})\int_{\mathbb{R}}(u^2+u_x^2+u_{xx}^2)dx\notag\\
	&\leq \frac{127\sqrt{2}}{48}\|u\|_{H^2}^3, 
	\end{align}
	\begin{equation}
	\|\partial_x^3P(D)F_3\|_{L^\infty}\leq \|\partial_x^3(\frac{1}{4}e^{-|\cdot|}(1+|\cdot |))\|_{L^\infty}\|u_xu_{xx}^2+4uu_xu_{xx}\|_{L^1}
	\leq \frac{3\sqrt{2}}{4}\|u\|_{H^2}^3,
	\end{equation}
	\begin{equation}
	\|\partial_x^2P(D)F_1 \|_{L^\infty}\leq \|\partial_x^2(\frac{1}{4}e^{-|\cdot|}(1+|\cdot |))\|_{L^1}\|\frac{1}{3}u_x^3\|_{L^\infty}
	\leq \frac{\sqrt{2}}{24}\|u\|_{H^2}^3,
	\end{equation}
	\begin{align}
	\|\partial_x^3P(D)F_2 \|_{L^\infty}&\leq \|\partial_x^3(\frac{1}{4}e^{-|\cdot|}(1+|\cdot |))\|_{L^1}\|-\frac{5}{3}u^3-5uu_x^2\|_{L^\infty} +\|\partial_x^3(\frac{1}{4}e^{-|\cdot|}(1+|\cdot |))\|_{L^\infty}\notag\\
	&~~\times\|-3u^2u_{xx}+24u_x^2u_{xx}-uu_{xx}^2\|_{L^1} \notag\\
	&\leq \frac{127\sqrt{2}}{24}\|u\|_{H^2}^3, 
	\end{align}
	\begin{equation}
	\|P(D)F_3\|_{L^\infty}\leq \|\frac{1}{4}e^{-|\cdot|}(1+|\cdot |)\|_{L^\infty}\|u_xu_{xx}^2+4uu_xu_{xx}\|_{L^1}
	\leq \frac{3\sqrt{2}}{8}\|u\|_{H^2}^3,
	\end{equation}
	\begin{equation}\label{eq4.17}
	\|2\partial_x^2P(D)F_3\|_{L^\infty}\leq 2\|\partial_x^2(\frac{1}{4}e^{-|\cdot|}(1+|\cdot |))\|_{L^\infty}\|u_xu_{xx}^2+4uu_xu_{xx}\|_{L^1} 
	\leq \frac{3\sqrt{2}}{4}\|u\|_{H^2}^3.
	\end{equation}
    Plugging \eqref{|u_x|}$\textendash$\eqref{eq4.17} into  \eqref{q1}, we obtain  
	\begin{equation}
	q_t+q_x(u^2+u_x^2)\leq -\frac{1}{2}q^2-2u_x^4+\frac{203}{24}\|u\|_{H^2}^4+\frac{55\sqrt{2}}{16}\|u\|_{H^2}^3|u_{xx}|.
	\end{equation}
	From \cite{Li.Z}, we know that equation \eqref{eq2} satisfies the conservation law: 
	$$E(u)=\int_{\mathbb{R}}(u^2+2u_x^2+u_{xx}^2)dx=E(u_0). $$
	Then, we have
	\begin{equation*}
	\|u\|_{H^2}^2\leq E(u)=E(u_0)\leq 2\|u_0\|_{H^2}^2,
	\end{equation*}
	which yields, for all $t>0$,  
	\begin{equation}\label{u H^2}
	\|u(t)\|_{H^2}\leq \sqrt{2}\|u_0\|_{H^2}.
	\end{equation}
	At the same time, for $|u_{xx}|$, setting $\epsilon=\frac{2\sqrt{2}}{55\|u\|_{H^2}^3} $, we have
	\begin{equation}\label{|u_xx|}
	|u_{xx}|=\frac{|q|}{|u_x|}\leq \epsilon q^2+\frac{1}{4\epsilon |u_x|^2}\leq \frac{2\sqrt{2}}{55\|u\|_{H^2}^3}q^2+ \frac{55\sqrt{2}\|u\|_{H^2}^3}{16|u_x|^2}.
	\end{equation}
    Plugging \eqref{u H^2} and \eqref{|u_xx|} into \eqref{q1} yields
    \begin{equation*}
    q_t+q_x(u^2+u_x^2)\leq -\frac{1}{4}q^2-2u_x^4+\frac{203}{6}\|u_0\|_{H^2}^4+\frac{3025}{16}\|u_0\|_{H^2}^6\frac{1}{u_x^2},
    \end{equation*}
	which implies that in Lagrangian coordinates, we have 
	\begin{equation}\label{q2}
	q_t(t,y(t,x_0))\leq -\frac{1}{4}(q(t,y(t,x_0)))^2-2(u_x(t,y(t,x_0)))^4+\frac{203}{6}\|u_0\|_{H^2}^4+\frac{3025}{16}\|u_0\|_{H^2}^6\frac{1}{(u_x(t,y(t,x_0)))^2},
	\end{equation}
	where the flow map $y(t,x_0)$ satisfies \eqref{eqflow} at $x=x_0$.\par
	Under the condition \eqref{con}, we derive $|u_{x}(t,y(t,x_0))|\geq \frac{1}{2}C_0$ for all $t\in[0,T_1]$. Indeed, using \eqref{u_xt}, we get 
	\begin{align}
	|\partial_t(u_{x}(t,y(t,x_0)))|&=|-2(uu_x^2)(t,y(t,x_0))+(\partial_xP(D)F_1+\partial_x^2P(D)F_2+\partial_x^3P(D)F_3)(t,y(t,x_0))|\notag\\
	&\leq 2\|u\|_{L^\infty}\|u_x\|_{L^\infty}^2+\|\partial_xP(D)F_1+\partial_x^2P(D)F_2+\partial_x^3P(D)F_3\|_{L^\infty}\notag\\
	&\leq 16\|u_0\|_{H^2}^3.
	\end{align} 
	Since $u_{x}(t,y(t,x))=\int_{0}^t \partial_\tau (u_{x}(\tau,y(\tau,x)))d\tau+u_{0x}(x)$, we thus have, for all $t\in[0,T_1]$
	\begin{align}
	|u_{x}(t,y(t,x_0))|&\geq |u_{0x}(x_0)|-|\int_{0}^t \partial_\tau (u_{x}(\tau,y(\tau,x)))d\tau| \notag\\
	&\geq C_0-\int_{0}^t |\partial_\tau (u_{x}(\tau,y(\tau,x)))|d\tau\notag\\
	&\geq C_0-16\|u_0\|_{H^2}^3T_1\notag\\
	&\geq \frac{1}{2}C_0.
	\end{align}
	Inserting the above inequality into \eqref{q2}, we obtain
	\begin{align}\label{q3}
	q_t(t,y(t,x_0))&\leq -\frac{1}{4}(q(t,y(t,x_0)))^2-\frac{1}{8}C_0^4+\frac{203}{6}\|u_0\|_{H^2}^4+\frac{3025}{4C_0^2}\|u_0\|_{H^2}^6 \notag\\
	&\leq -\frac{1}{4}(q(t,y(t,x_0)))^2+K.
	\end{align} 
	Owing to the condition \eqref{con}, $-\frac{1}{4}q_0^2(x_0)+K<0$ holds.
	We can easily deduce that $q(t,y(t,x_0))<-2\sqrt{K}$ for all $t\in[0,T_1]$ by \eqref{q3}.\par 
	Consider the ordinary differential equation
	$$f'(t)=-\frac{1}{4}f^2(t)+K,~f(0)=q_0(x_0)<-2\sqrt{K}, $$
	then we have 
	\begin{align*}
	q(t,y(t,x_0))\leq f(t)=\frac{2\sqrt{K}(1-Ce^{-\sqrt{K}t})}{1+Ce^{-\sqrt{K}t}}=-2\sqrt{K}+\frac{4\sqrt{K}}{1+Ce^{-\sqrt{K}t}}
	\end{align*}
	with $C=\frac{2\sqrt{K}-q_0(x_0)}{2\sqrt{K}+q_0(x_0)}<0.$
	Observing that $t\mapsto 1+Ce^{-\sqrt{K}t}$ is an increasing function and $$1+Ce^{-\sqrt{K}t}\Big|_{t=0}=1+C=\frac{4\sqrt{K}}{2\sqrt{K}+q_0(x_0)}<0, ~1+Ce^{-\sqrt{K}T_2}=0$$
	we infer that 
	$$\lim_{t\rightarrow T_2^{-}} \inf_{x\in\mathbb{R}} (u_x(x,t)u_{xx}(x,t))\leq \lim_{t\rightarrow T_2^{-}}q(t,y(t,x_0))=-\infty.$$
	Finally, since $$T_2=\frac{1}{\sqrt{K}}\ln\frac{-2\sqrt{K}+q_0(x_0)}{2\sqrt{K}+q_0(x_0)}=\frac{1}{\sqrt{K}}\ln(1+\frac{4\sqrt{K}}{-(2\sqrt{K}+q_0(x_0))})\leq \frac{1}{\sqrt{K}}\frac{4\sqrt{K}}{-(2\sqrt{K}+q_0(x_0))}\leq T_1,$$
	the proof is complete.
	
\end{proof}
\section{Ill-posedness}
In this section, we will establish the ill-posedness of equation \eqref{eqn} in the critical Sobolev space $H^{\frac{1}{2}}$ by proving the occurrence of norm inflation. Our ill-posedness result is stated as follows:
\begin{theo}\label{thm5.1}
	 For any sufficiently large positive integer $N\in\mathbb{N}^+$, there exists an initial data $n_0^N\in H^\infty(\mathbb{R})$ such that the following conclusions hold:
	\begin{enumerate}[(1)]
		\item $\|n_0^N\|_{H^{\frac{1}{2}}}\lesssim\frac{1}{\ln N}$;
		\item There exists a unique solution $n^N\in C([0,T);H^\infty(\mathbb{R}))$ to  equation \eqref{eqn} with initial data $n_0^N$, and the maximal lifespan $T^*$ satisfying $T^*<N^{-\frac{1}{6}}$;
		\item $\limsup\limits_{t\rightarrow T^*}\|n^N\|_{H^{\frac{1}{2}}}\gtrsim  \limsup\limits_{t\rightarrow T^*}\|n^N\|_{B^{0}_{\infty,\infty}}=+\infty.$
	\end{enumerate}
	
\end{theo}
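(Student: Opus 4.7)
The strategy is to exhibit, for each sufficiently large $N$, explicit smooth initial data $n_0^N$ satisfying (1) whose associated unique $H^\infty$-solution is forced to leave every reasonable space before time $N^{-1/6}$. The principal tool is the quantitative blow-up Theorem~\ref{thm4.4}, together with the blow-up criterion Corollary~\ref{cor4.3}; the mechanism exploits the logarithmic gap between the critical Sobolev space $H^{5/2}$ (for which $\|u\|_{H^{5/2}}\sim\|n\|_{H^{1/2}}$) and $W^{2,\infty}$ in one space dimension, which allows $u_{0x}u_{0xx}$ to be made pointwise large in absolute value with the correct sign while $\|n_0\|_{H^{1/2}}$ is held at the scale $1/\log N$.

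For the construction of $n_0^N$, I would fix an even bump $\phi\in\mathcal{S}(\mathbb{R})$ whose Fourier transform is supported in an annulus, and seek $u_0^N$ as a coherent dyadic superposition
\begin{equation*}
u_0^N(x)=\lambda_N\sum_{k=k_0}^{k_0+\lfloor\log_2 N\rfloor}a_k\,\phi(2^kx)\sin(2^kx-\theta_k),
\end{equation*}
with the parameters $k_0,a_k,\theta_k,\lambda_N$ tuned so that the Littlewood--Paley blocks of $n_0^N=(1-\partial_x^2)u_0^N$ at distinct dyadic scales are almost orthogonal in $L^2$, the pointwise contributions to $u_{0x}u_{0xx}$ at a reference point $x_0=0$ superpose coherently with the correct negative sign, and the global prefactor $\lambda_N$ (of order a negative power of $\log N$) enforces $\|n_0^N\|_{H^{1/2}}\lesssim 1/\log N$, giving (1).

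Since $n_0^N\in H^\infty$, Theorem~\ref{the3.1} produces a unique local solution in every $E^s_{2,2}(T)$, and iterating the a~priori estimate from Step~2 of its proof upward in the regularity index $s$ delivers a maximal $H^\infty$-solution $n^N\in C([0,T^*);H^\infty)$. To establish (2), I would verify that the coherent construction meets the hypotheses of Theorem~\ref{thm4.4}: the domination $|u_{0x}^N(0)|\geq C_1\|u_0^N\|_{H^2}$ at the saturation of the Sobolev embedding $H^2\hookrightarrow W^{1,\infty}$, and a pointwise $|u_{0x}^N(0)u_{0xx}^N(0)|$ much larger than $\sqrt{K}$, where $K=34\|u_0^N\|_{H^2}^4+\cdots$ stays polynomially controlled. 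The explicit blow-up formula then yields $T^*\leq T_2\sim 4/|u_{0x}^N(0)u_{0xx}^N(0)|$, and a suitable calibration of the parameters drives $T_2$ below $N^{-1/6}$.

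Finally, the finiteness of $T^*$ together with $u_0^N\in H^2$ lets me invoke Corollary~\ref{cor4.3}, giving $\int_0^{T^*}\|n^N(t)\|_{B^0_{\infty,\infty}}\,dt=+\infty$ and hence $\limsup_{t\to T^*}\|n^N\|_{B^0_{\infty,\infty}}=+\infty$. The Besov embedding from Proposition~\ref{basic}(3), namely $H^{1/2}(\mathbb{R})=B^{1/2}_{2,2}\hookrightarrow B^0_{\infty,2}\hookrightarrow B^0_{\infty,\infty}$, then provides $\|n^N\|_{B^0_{\infty,\infty}}\lesssim \|n^N\|_{H^{1/2}}$ and closes (3). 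The principal obstacle is the delicate combinatorial tuning in the construction: the amplitudes $a_k$, phases $\theta_k$, dyadic window and prefactor $\lambda_N$ must simultaneously deliver (i) quasi-orthogonality of the dyadic blocks to secure $\|n_0^N\|_{H^{1/2}}\lesssim 1/\log N$, (ii) a coherent pointwise $|u_{0x}(0)u_{0xx}(0)|$ of the correct polynomial order in $N$ so that $T_2<N^{-1/6}$, and (iii) the auxiliary condition $|u_{0x}(0)|\geq C_1\|u_0\|_{H^2}$ of Theorem~\ref{thm4.4}; trading these against one another to meet the sharp $N^{-1/6}$ threshold is the central quantitative calculation of the whole argument.
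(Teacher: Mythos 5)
Your proposal follows essentially the same route as the paper: lacunary smooth data kept small in $H^{1/2}$ at the scale $1/\ln N$, verification of the hypotheses of Theorem~\ref{thm4.4} at a reference point to force blow-up before $N^{-1/6}$, and then Corollary~\ref{cor4.3} combined with the embedding $H^{1/2}=B^{1/2}_{2,2}\hookrightarrow B^{0}_{\infty,\infty}$ to convert finite lifespan into norm inflation; that logical skeleton is correct and matches the paper's. The one substantive difference is that you leave the construction itself as a ``delicate combinatorial tuning'' of amplitudes, phases and a prefactor, whereas the paper shows this tuning is not actually delicate if one decouples the two pointwise conditions: it takes $u_0^N=\frac{1}{\ln N}\sum_{j=1}^N 2^{-3j}j^{-2/3}f_j+\frac{1}{\ln N}g$ with $\widehat{f_j}=\psi(2^{-j}\cdot)$ \emph{even} (so every $f_j'(0)=0$ and the lacunary sum contributes nothing to $u_{0x}(0)$) and $g$ a single \emph{odd} Schwartz function chosen once and for all with $g'(0)\geq \frac{C_0}{4}(1+\|g\|_{H^2})$, which alone supplies the saturation condition $|u_{0x}^N(0)|\geq C_1\|u_0^N\|_{H^2}$; meanwhile the weights $2^{-3j}j^{-2/3}$ make $\sum_j 2^{-3j}j^{-2/3}f_j''(0)\sim -N^{1/3}$ coherently while keeping $\|u_0^N\|_{B^{5/2}_{2,2}}=O(1)$ before the $1/\ln N$ prefactor, so that $u_{0x}^N(0)u_{0xx}^N(0)\sim -N^{1/3}/(\ln N)^2$ and $T_2\lesssim (\ln N)^2 N^{-1/3}<N^{-1/6}$. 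Your single-family ansatz $\lambda_N\sum_k a_k\phi(2^kx)\sin(2^kx-\theta_k)$ could probably be made to work, but each block would then have to contribute to both $u_{0x}(0)$ and $u_{0xx}(0)$ with compatible signs, which is exactly the trading you identify as the main obstacle; the even-plus-odd splitting removes it. With that construction supplied, the rest of your argument coincides with the paper's proof.
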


\begin{proof}
	Let $\psi\in C_0^\infty(\mathbb{R})$ is an even function with $\mathrm{Supp}\  \psi\subset\{\xi\in\mathbb{R}:\frac{4}{3}\leq |\xi|\leq \frac{3}{2}\}$ and values in $[0,1]$. Let $g\in\mathcal{S}$ be an odd function such that
	$$\frac{g'(0)}{1+\|g\|_{H^2}}\geq\frac{C_0}{4},$$ 
	where the constant $C_0$ will be specified later. Define 
	$$ u_0^N(x)=\frac{1}{\ln N}\sum_{j=1}^N\frac{1}{2^{3j}j^{\frac{2}{3}}}f_j(x)+\frac{1}{\ln N}g(x),$$
	with $\widehat{f_j}(\xi)=\psi(2^{-j}\xi)$. A straightforward computation shows that $n_0^N=(1-\partial_{x}^2)u_0^N\in H^\infty(\mathbb{R})$ and that $f_j$ is an even function satisfying $\Delta_l f_j = \delta_{lj} f_j$ for all $l \leq -1$, where $\delta_{lj} $ denotes the Kronecker delta. This implies that, for $1\leq l\leq N$, 
	\begin{align*}
	\|\Delta_lu_0^N\|_{L^2}&=\|\frac{1}{\ln N}\mathcal{F}^{-1}\Big(\varphi(2^{-l}\xi)\sum_{j=1}^N\frac{1}{2^{3j}j^{\frac{2}{3}}}\psi(2^{-j}\xi)\Big)+\frac{1}{\ln N}\Delta_l g\|_{L^2}\notag\\
	&=\|\frac{f_l}{2^{3l}l^{\frac{2}{3}}\ln N}+\frac{1}{\ln N}\Delta_l g\|_{L^2}\notag\\
	&\leq \frac{C}{2^{\frac{5l}{2}}l^{\frac{2}{3}}\ln N}+\frac{1}{\ln N}\|\Delta_l g\|_{L^2}.
	\end{align*}
	We thus deduce that
	\begin{align}\label{u0N_H2}
	\|u_0^N\|_{H^2}\leq C\|u_0^N\|_{B^2_{2,2}} 
	=C\Big(\sum_{l\geq -1}2^{4l}\|\Delta_lu_0^N\|_{L^2}^2\Big)^{\frac{1}{2}}
	&\leq \frac{C}{\ln N}\Big(\Big(\sum_{l\geq 1}\frac{1}{2^{l}l^{\frac{4}{3}}}\Big)^{\frac{1}{2}}+\|g\|_{H^2}\Big)\notag\\
	&\leq \frac{C_0}{\ln N}\Big(1+\|g\|_{H^2}\Big),
	\end{align}
	where the constant $C_0$ depends only on $d$ and $\|\psi\|_{L^2}$. At the same time, we have
	\begin{align*}
	\|u_0^N\|_{B^{\frac{5}{2}}_{2,2}}=\Big(\sum_{l\geq -1}2^{5l}\|\Delta_lu_0^N\|_{L^2}^2\Big)^{\frac{1}{2}} 
	\lesssim\frac{1}{\ln N}\Big(\Big(\sum_{l\geq 1}\frac{1}{l^{\frac{4}{3}}}\Big)^{\frac{1}{2}}+\|g\|_{H^{\frac{5}{2}}}\Big)
	\lesssim \frac{1}{\ln N},
	\end{align*}
	which implies that
	\begin{equation}\label{n0N}
	\|n_0^N\|_{H^{\frac{1}{2}}}=\|(1-\partial_{x}^2)u_0^N\|_{H^{\frac{1}{2}}}\lesssim \|u_0^N\|_{B^{\frac{5}{2}}_{2,2}}\lesssim \frac{1}{\ln N}.
	\end{equation}
	
	Now, we proceed to calculate $u_{0x}^N$ and $u_{0xx}^N$ at $x=0$. According to \eqref{u0N_H2}, we have
	\begin{align}\label{u0x(0)}
	u_{0x}^N(0)=\frac{1}{\ln N}\sum_{j=1}^N\frac{1}{2^{3j}j^{\frac{2}{3}}}f_j'(0)+\frac{1}{\ln N}g'(0)\geq\frac{C_0}{4\ln N}\Big(1+\|g\|_{H^2}\Big) \geq \frac{1}{4}\|u_0^N\|_{H^2}
	\end{align}
	and
	\begin{align*}
	u_{0xx}^N(0)=\frac{1}{\ln N}\sum_{j=1}^N\frac{1}{2^{3j}j^{\frac{2}{3}}}f_j''(0)+\frac{1}{\ln N}g''(0) &=\frac{1}{\ln N} \sum_{j=1}^N\frac{1}{2^{3j}j^{\frac{2}{3}}}\mathcal{F}^{-1}(-\xi^2\psi(2^{-j}\xi))(0)\notag\\
	&=-\frac{C}{\ln N}\sum_{j=1}^N\frac{1}{j^{\frac{2}{3}}}\notag\\
	&\sim-\frac{N^\frac{1}{3}}{\ln N},
	\end{align*}
	which yield 
	\begin{align}\label{u0xu0xx(0)}
	u_{0x}^N(0)u_{0xx}^N(0)\sim-\frac{N^\frac{1}{3}}{(\ln N)^2}\rightarrow -\infty,\quad \text{as}~ N\rightarrow\infty.
	\end{align}
	Since $n_0^N\in H^\infty(\mathbb{R})$, Theorem \ref{the3.1} yields that there exists a unique solution $n^N\in C([0,T);H^\infty(\mathbb{R}))$ to  equation \eqref{eqn} with initial data $n_0^N$. Subsequently, by virtue of \eqref{u0N_H2}, \eqref{n0N}, \eqref{u0x(0)} and \eqref{u0xu0xx(0)} and applying Theorem \ref{thm4.4}, we can conclude that for sufficiently large 
	$N$, the maximal lifespan $T^*$ of $n^N$ satisfies 
	\begin{align*}
	T^*\leq \frac{1}{\sqrt{K}}\ln\frac{-2\sqrt{K}+u^N_{0x}(0)u^N_{0xx}(0)}{2\sqrt{K}+u^N_{0x}(0)u^N_{0xx}(0)}\sim(\ln N)^2\ln \frac{N^\frac{1}{3}+\frac{1}{\ln N}}{N^\frac{1}{3}-\frac{1}{\ln N}}\sim\frac{\ln N}{N^\frac{1}{3}}<N^{-\frac{1}{6}}.
	\end{align*}
	
	Finally, thanks to Corollary \ref{cor4.3}, we have 
	$$\sup_{t\in[0,T^*)}\|n^N\|_{H^{\frac{1}{2}}}\gtrsim \sup_{t\in[0,T^*)}\|n^N\|_{B^{0}_{\infty,\infty}}=+\infty. $$
	This completes the proof of Theorem \ref{thm5.1}.  
\end{proof}

	\smallskip
	\noindent\textbf{Acknowledgments.}~~This work was
	supported by the National Natural Science Foundation of China (No.12171493).

	\phantomsection
	\addcontentsline{toc}{section}{\refname}
	\bibliographystyle{abbrv} 
	\bibliography{refref}

\begin{thebibliography}{10}

\bibitem{BCD}
H.~Bahouri, J.-Y. Chemin, and R.~Danchin.
\newblock {\em Fourier analysis and nonlinear partial differential equations},
  volume 343 of {\em Grundlehren der Mathematischen Wissenschaften}.
\newblock Springer, Heidelberg, 2011.

\bibitem{Beals2000}
R.~Beals, D.~H. Sattinger, and J.~Szmigielski.
\newblock Multipeakons and the classical moment problem.
\newblock {\em Adv. Math.}, 154(2):229--257, 2000.

\bibitem{Camassa1993}
R.~Camassa and D.~D. Holm.
\newblock An integrable shallow water equation with peaked solitons.
\newblock {\em Phys. Rev. Lett.}, 71(11):1661--1664, 1993.

\bibitem{Con4}
A.~Constantin.
\newblock The {H}amiltonian structure of the {C}amassa-{H}olm equation.
\newblock {\em Exposition. Math.}, 15(1):53--85, 1997.

\bibitem{Con2}
A.~Constantin.
\newblock On the scattering problem for the {C}amassa-{H}olm equation.
\newblock {\em R. Soc. Lond. Proc. Ser. A Math. Phys. Eng. Sci.},
  457(2008):953--970, 2001.

\bibitem{Con6}
A.~Constantin.
\newblock The trajectories of particles in {S}tokes waves.
\newblock {\em Invent. Math.}, 166(3):523--535, 2006.

\bibitem{Con5}
A.~Constantin and J.~Escher.
\newblock Wave breaking for nonlinear nonlocal shallow water equations.
\newblock {\em Acta Math.}, 181(2):229--243, 1998.

\bibitem{Con8}
A.~Constantin and J.~Escher.
\newblock Well-posedness, global existence, and blowup phenomena for a periodic
  quasi-linear hyperbolic equation.
\newblock {\em Comm. Pure Appl. Math.}, 51(5):475--504, 1998.

\bibitem{Cons1}
A.~Constantin and R.~I. Ivanov.
\newblock On an integrable two-component {C}amassa-{H}olm shallow water system.
\newblock {\em Phys. Lett. A}, 372(48):7129--7132, 2008.

\bibitem{Con3}
A.~Constantin and H.~P. McKean.
\newblock A shallow water equation on the circle.
\newblock {\em Comm. Pure Appl. Math.}, 52(8):949--982, 1999.

\bibitem{Con7}
A.~Constantin and W.~A. Strauss.
\newblock Stability of peakons.
\newblock {\em Comm. Pure Appl. Math.}, 53(5):603--610, 2000.

\bibitem{Danchin2001}
R.~Danchin.
\newblock A few remarks on the {C}amassa-{H}olm equation.
\newblock {\em Differential Integral Equations}, 14(8):953--988, 2001.

\bibitem{Danchin2003}
R.~Danchin.
\newblock A note on well-posedness for {C}amassa-{H}olm equation.
\newblock {\em J. Differential Equations}, 192(2):429--444, 2003.

\bibitem{Fokas1995}
A.~S. Fokas.
\newblock On a class of physically important integrable equations.
\newblock volume~87, pages 145--150. 1995.
\newblock The nonlinear Schr\"odinger equation (Chernogolovka, 1994).

\bibitem{Fu2013}
Y.~Fu, G.~Gui, Y.~Liu, and C.~Qu.
\newblock On the {C}auchy problem for the integrable modified {C}amassa-{H}olm
  equation with cubic nonlinearity.
\newblock {\em J. Differential Equations}, 255(7):1905--1938, 2013.

\bibitem{Fuchssteiner1996}
B.~Fuchssteiner.
\newblock Some tricks from the symmetry-toolbox for nonlinear equations:
  generalizations of the {C}amassa-{H}olm equation.
\newblock {\em Phys. D}, 95(3-4):229--243, 1996.

\bibitem{Fokas1981}
B.~Fuchssteiner and A.~S. Fokas.
\newblock Symplectic structures, their {B}\"acklund transformations and
  hereditary symmetries.
\newblock {\em Phys. D}, 4(1):47--66, 1981/82.

\bibitem{Gui2013}
G.~Gui, Y.~Liu, P.~J. Olver, and C.~Qu.
\newblock Wave-breaking and peakons for a modified {C}amassa-{H}olm equation.
\newblock {\em Comm. Math. Phys.}, 319(3):731--759, 2013.

\bibitem{He2017}
H.~He and Z.~Yin.
\newblock On a generalized {C}amassa-{H}olm equation with the flow generated by
  velocity and its gradient.
\newblock {\em Appl. Anal.}, 96(4):679--701, 2017.

\bibitem{Li.J2016}
J.~Li and Z.~Yin.
\newblock Remarks on the well-posedness of {C}amassa-{H}olm type equations in
  {B}esov spaces.
\newblock {\em J. Differential Equations}, 261(11):6125--6143, 2016.

\bibitem{Li.J}
J.~Li and Z.~Yin.
\newblock Well-posedness and global existence for a generalized
  {D}egasperis-{P}rocesi equation.
\newblock {\em Nonlinear Anal. Real World Appl.}, 28:72--90, 2016.

\bibitem{Li.Z}
Z.~Li.
\newblock Stability of pseudo peakons for a new fifth order {CH} type equation
  with cubic nonlinearities.
\newblock {\em Monatsh. Math.}, 205(3):603--614, 2024.

\bibitem{Liu2018}
Q.~Liu and Z.~Qiao.
\newblock Fifth order {C}amassa–{H}olm model with pseudo-peakons and
  multi-peakons.
\newblock {\em Int. J. Non-Linear Mech.}, 105:179--185, 2018.

\bibitem{McLachlan2009}
R.~McLachlan and X.~Zhang.
\newblock Well-posedness of modified {C}amassa-{H}olm equations.
\newblock {\em J. Differential Equations}, 246(8):3241--3259, 2009.

\bibitem{Mo2001}
C.~Morosi and L.~Pizzocchero.
\newblock On the constants for some {S}obolev imbeddings.
\newblock {\em J. Inequal. Appl.}, 6(6):665--679, 2001.

\bibitem{Olver1996}
P.~J. Olver and P.~Rosenau.
\newblock Tri-{H}amiltonian duality between solitons and solitary-wave
  solutions having compact support.
\newblock {\em Phys. Rev. E (3)}, 53(2):1900--1906, 1996.

\bibitem{Qiao2006}
Z.~Qiao.
\newblock A new integrable equation with cuspons and {W}/{M}-shape-peaks
  solitons.
\newblock {\em J. Math. Phys.}, 47(11):112701, 9, 2006.

\bibitem{Tang2015}
H.~Tang and Z.~Liu.
\newblock Well-posedness of the modified {C}amassa-{H}olm equation in {B}esov
  spaces.
\newblock {\em Z. Angew. Math. Phys.}, 66(4):1559--1580, 2015.

\bibitem{Tian2011}
L.~Tian, P.~Zhang, and L.~Xia.
\newblock Global existence for the higher-order {C}amassa-{H}olm shallow water
  equation.
\newblock {\em Nonlinear Anal.}, 74(7):2468--2474, 2011.

\bibitem{Whitham1999}
G.~B. Whitham.
\newblock {\em Linear and nonlinear waves}.
\newblock Pure and Applied Mathematics (New York). John Wiley \& Sons, Inc.,
  New York, 1999.
\newblock Reprint of the 1974 original, A Wiley-Interscience Publication.

\bibitem{Zhu2021}
M.~Zhu, L.~Cao, Z.~Jiang, and Z.~Qiao.
\newblock Analytical properties for the fifth order {C}amassa-{H}olm ({FOCH})
  model.
\newblock {\em J. Nonlinear Math. Phys.}, 28(3):321--336, 2021.

\end{thebibliography}

\end{document}